\newcommand{\EE}{\mathsf{E}}
\newcommand{\PP}{\mathsf{P}}
\newcommand{\NN}{\mathbb{N}}
\newcommand{\RR}{\mathbb{R}}
\newcommand{\ZZ}{\mathbb{Z}}
\newcommand{\Be}{\mathrm{Be}}
\newcommand{\Po}{\mathrm{Po}}
\newcommand{\ee}{\mathrm{e}}
\newcommand{\bA}{\boldsymbol{A}}
\newcommand{\bj}{\boldsymbol{j}}
\newcommand{\bk}{\boldsymbol{k}}
\newcommand{\bell}{\boldsymbol{\ell}}
\newcommand{\bmm}{\boldsymbol{m}}
\newcommand{\bp}{\boldsymbol{p}}
\newcommand{\be}{\boldsymbol{e}}
\newcommand{\bB}{\boldsymbol{B}}
\newcommand{\bC}{\boldsymbol{C}}
\newcommand{\bx}{\boldsymbol{x}}
\newcommand{\by}{\boldsymbol{y}}
\newcommand{\bX}{\boldsymbol{X}}
\newcommand{\bY}{\boldsymbol{Y}}
\newcommand{\bU}{\boldsymbol{U}}
\newcommand{\bI}{\boldsymbol{I}}
\newcommand{\bG}{\boldsymbol{G}}
\newcommand{\bH}{\boldsymbol{H}}
\newcommand{\cL}{\mathcal{L}}
\newcommand{\tc}{\widetilde{c}}
\newcommand{\distr}{\stackrel{\mathcal{D}}{\longrightarrow}}
\newcommand{\vare}{\varepsilon}
\newcommand{\varr}{\varrho}
\newcommand{\blambda}{\boldsymbol{\lambda}}
\newcommand{\bLambda}{\boldsymbol{\Lambda}}
\newcommand{\bvare}{\boldsymbol{\vare}}
\newcommand{\bxi}{\boldsymbol{\xi}}
\newcommand{\bTheta}{\boldsymbol{\Theta}}
\newcommand{\bzero}{\boldsymbol{0}}
\newcommand{\bbone}{\boldsymbol{1}}
\newcommand{\diag}{\operatorname{diag}}
\numberwithin{equation}{section}
\newtheorem{lemma}{Lemma}
\newtheorem{theorem}{Theorem}
\newtheorem{proposition}{Proposition}
\newtheorem{corollary}{Corollary}
\theoremstyle{definition}
\newtheorem{remark}{Remark}
\newtheorem{example}{Example}
\renewenvironment{proof}{\smallskip \noindent \textbf{Proof.}}
{\vspace*{1pt} \hfill $\square$ \medskip}
\newenvironment{proof*}[1]{\smallskip \noindent
\textbf{Proof of #1.}}
{\vspace*{1pt} \hfill $\square$ \medskip}
\begin{document}

\begin{center}
{\LARGE \textbf{
Asymptotic behavior of a multi-type nearly
critical Galton--Watson processes with immigration
}}
\end{center}

\bigskip

\begin{center}
\textsc{L\'aszl\'o Gy\"orfi$^1$, M\'arton Isp\'any$^2$,
P\'eter Kevei$^3$} and \textsc{Gyula Pap$^4$}
\end{center}

\smallskip

\begin{center}
$^1$Department of Computer Science and Information Theory,
Budapest University \\
of Technology and Economics, 
Stoczek u. 2, Budapest, Hungary, H-1521; \\
\textit{e-mail}: \texttt{gyorfi@szit.bme.hu}, \\
$^2$Department of Information Technology, 
Faculty of Informatics, \\ University of Debrecen, 
Pf.12, Debrecen, Hungary, H-4010; \\
\textit{e-mail}: \texttt{ispany.marton@inf.unideb.hu}, \\
$^3$MTA--SZTE Analysis and Stochastics Research Group, 
Bolyai Institute, \\
University of Szeged, 
Aradi v\'ertan\'uk tere 1, Szeged, Hungary, H--6720; \\
\textit{e-mail}: \texttt{kevei@math.u-szeged.hu} \\
$^4$Department of Stochastics, 
Bolyai Institute, University of Szeged, \\
Aradi v\'ertan\'uk tere 1, Szeged, Hungary, H--6720;\\
\textit{e-mail}: \texttt{papgy@math.u-szeged.hu}
\end{center}

\begin{abstract}
Multi-type inhomogeneous Galton--Watson process with immigration
is investigated, where the offspring mean matrix slowly converges
to a critical mean matrix.
Under general conditions we obtain limit distribution for the process,
where the coordinates of the limit vector are not necessarily independent.

\textit{Keywords:}
multi-type Galton--Watson process with immigration;
INAR model; nearly critical model;
multi-dimensional Poisson distribution; Toeplitz matrix;
inhomogeneous model.

\textit{AMS 2000 Subject Classification:} Primary 60J80, Secondary 60J27; 60J85 
\end{abstract}

\section{Introduction} \label{intr}

A zero start single-type inhomogeneous Galton--Watson branching process with
immigration (GWI process) $(X_n)_{n\in\ZZ_+}$ is defined as 
\[
\begin{cases}
X_n = \sum_{j=1}^{X_{n-1}} \xi_{n,j} + \vare_n ,
\qquad n \in \NN , \\
X_0 = 0 ,
\end{cases}
\]
where $\{ \xi_{n,j}, \, \vare_n : n, j \in \NN \}$
are independent random variables with non-negative integer values such
that for each $n\in\NN$, $\{ \xi_{n,j} : j \in \NN \}$ are identically
distributed.
We can interpret $X_n$ as the number of individuals in the $n^\mathrm{th}$
generation of a population, $\xi_{n,j}$ is the number of offsprings
produced by the $j^\mathrm{th}$ individual belonging to the
$(n-1)^\mathrm{th}$ generation, and $\vare_n$ is the number of
immigrants in the $n^\mathrm{th}$ generation.
A zero start one-dimensional inhomogeneous integer-valued autoregressive
(INAR) time series is a special single-type GWI process,
such that the offspring distributions are
Bernoulli.

Assume that $\varrho_n := \EE \xi_{n,1} < \infty$ and 
$m_n := \EE  \vare_n < \infty$.
A one-dimensional inhomogeneous GWI process $(X_n)_{n\in\ZZ_+}$ is called
\emph{nearly critical} if $\varrho_n\to 1$ as $n\to\infty$.
Gy\"orfi et al.~\cite{GyorIspPapVar} investigated the asymptotic behavior
of nearly critical one-dimensional INAR processes with $\varrho_n < 1$,
under the assumption $\sum_{n=1}^\infty (1-\varrho_n) = \infty$,
i.e.~the convergence $\varrho_n \to 1$, $n \to \infty$, is not too fast.
In the followings any non-specified limit relation is meant as
$n \to \infty$.
It turns out in Theorem 1 \cite{GyorIspPapVar}
that in case of Bernoulli immigration the process
$X_n$ converges in distribution to a Poisson distribution with parameter
$\lambda$, when $m_n / (1-\varrho_n) \to \lambda$. That is, if there is a
balance between the immigration $m_n$ and the extinction effect $1-\varrho_n$,
then a non-trivial limit distribution exists.
Moreover, in \cite{GyorIspPapVar} general immigration distributions are
investigated: when the factorial moments of the immigration at generation $n$
is of order $1- \varrho_n$ then compound Poisson limit appears.
These investigations were extended by Kevei \cite{K} for general GWI processes,
that is the Bernoulli assumption on the offsprings was relaxed.
In the present paper we investigate the multi-type version of the previous
problem.

In a multi-type homogeneous Galton--Watson process (without immigration) 
the main data of the process is the spectral radius $\varrho(\bB)$
of the mean matrix $\bB$, where
$\varr(\bB) := \max\{ |\lambda| : \text{$\lambda$ is an eigenvalue of $\bB$} \}$. 
By classical results, a positively regular, non-singular
multi-type Galton--Watson process dies out almost surely if and only if
$\varrho(\bB) \leq 1$. The process is called subcritical, critical
or supercritical if $\varrho < 1$, $=1$ or $>1$, respectively.
In the multi-type setup we also consider nearly-critical processes, that is
we assume that the sequence of offspring mean matrices converge to a
critical limit matrix. However, contrary to the one-dimensional case, there
are a lot of critical matrices, and thus a lot of nearly-critical processes.
The formal definition comes below.

An inhomogeneous multi-type GWI process with $d$ types
\[
\bX_n = ( X_{n,1} , \ldots, X_{n,d} ), \quad n \in \ZZ_+ ,
\]
defined as 
\[
\begin{cases}
\bX_n = \sum_{j=1}^{X_{n-1,1}} \bxi_{n,j,1}
+ \ldots + \sum_{j=1}^{X_{n-1,d}} \bxi_{n,j,d}
+ \bvare_n , \qquad n \in \NN , \\
\bX_0 = \bzero ,
\end{cases}
\]
where
$\{ \bxi_{n,j,i}, \, \bvare_n : n, j \in \NN , \, i \in \{1,\dots,d\} \}$
are independent $d$-dimensional random vectors with non-negative 
integer coordinates such that for each $n\in\NN$ and $i \in \{1,\dots,d\}$,
$\{ \bxi_{n,j,i} : j \in \NN \}$ are identically distributed, and
$\bzero$ is the zero vector.
Then $X_{n,i}$ is the number of $i$-type  individuals in the $n^\mathrm{th}$ generation
of a population, $\bxi_{n,j,i}$ is the number of offsprings produced by the $j^\mathrm{th}$
individual of type $i$ belonging to the
$(n-1)^\mathrm{th}$ generation, and $\bvare_n$ is the number of
immigrants.
When the offsprings are Bernoulli distributed 
(see Section \ref{section:general}
for the definition of multidimensional Bernoulli distribution)
we obtain the $d$-dimensional inhomogeneous INAR time series.

Suppose that the offspring and immigration means are finite.
Let us denote the offspring mean matrix and the immigration mean vector in the
$n^\mathrm{th}$ generation by
\[
\bB_n =
\begin{bmatrix}
\EE \bxi_{n,1,1} \\
\vdots \\
\EE \bxi_{n,1,d}
\end{bmatrix}
\in \RR_+^{d \times d}, \qquad
\EE \bvare_n = \bmm_n \in \RR_+^d ,
\]
where the elements of $\RR_+^d$ are $d$-dimensional row vectors with non-negative
coordinates.
Then $(\bB_n)_{i,j}$ is the expected number of type-$j$ offsprings of a single
type-$i$ particle in generation $n$. Then we have the recursion
\begin{equation} \label{rec}
\EE \bX_n = (\EE \bX_{n-1}) \,  \bB_n + \bmm_n , \qquad n \in \NN ,
\end{equation}
since
\begin{align*}
\EE( \bX_n \mid \bX_{n-1} )
&= \EE\Bigg( \sum_{j=1}^{X_{n-1,1}} \bxi_{n,j,1} + \ldots
+ \sum_{j=1}^{X_{n-1,d}} \bxi_{n,j,d}
+ \bvare_n \,\bigg|\, \bX_{n-1} \Bigg) \\
&= \sum_{j=1}^{X_{n-1,1}} \EE \bxi_{n,j,1} + \ldots
+ \sum_{j=1}^{X_{n-1,d}} \EE \bxi_{n,j,d}  + \EE \bvare_n \\
&= X_{n-1,1} \, \EE \bxi_{n,1,1}  + \ldots + X_{n-1,d} \, \EE \bxi_{n,1,d} 
+ \EE \bvare_n
= \bX_{n-1} \bB_n  + \bmm_n .
\end{align*}
The sequence $(\bB_n)_{n\in\NN}$ of the offspring mean
matrices plays a crucial role in the asymptotic behavior of the sequence
$(\bX_n)_{n\in\ZZ_+}$ as $n\to\infty$.
A $d$-dimensional inhomogeneous Galton--Watson process
$(\bX_n)_{n\in\ZZ_+}$ is
called \emph{nearly critical} if $\bB_n\to \bB$  and
$\varr(\bB) = 1$.
We will investigate the asymptotic behavior of nearly critical GWI processes.

Homogeneous multi-type GWI processes have been introduced and studied by
Quine \cite{Quine, Quine2}. In \cite{Quine} necessary and sufficient condition is 
given for the existence of stationary distribution in the subcritical case.
A complete answer is given by Kaplan \cite{kaplan}. Also Mode \cite{mode} gives
a sufficient condition for the existence of a stationary distribution, and 
in a special case he shows that the limiting distribution is
a multivariate Poisson with independent components.

Branching process models are extensively used in various parts of natural sciences,
among others in biology, epidemiology, physics, computer sciences.
In particular, multi-type GWI processes were used
to determine the asymptotic mean and covariance matrix of deleterious genes
and mutant genes in a stationary population by  Gladstien and Lange \cite{GlLa78},
and in non-stationary population by Lange and Fan \cite{LaFa97}.
Another rapidly developing area where multi-type GWI processes can be applied
is the theory of polling systems.
Resing \cite{Res} pointed out that a large variety of polling models can be
described as a multi-type GWI process.
Resing \cite{Res}, van der Mei \cite{Mei}, Boon \cite{boon1}, Boon et al.~\cite{boon2}
and Altman and Fiems \cite{alt} investigated several communication protocols applied in
info-communication networks with differentiated services.
There are different quality of services,
for example, some of them are delay sensitive
(telephone, on-line video, etc.), while others tolerate some delay (e-mail, internet,
downloading files, etc.). Thus, the services are grouped into service classes such
that each class has an own transmission protocol like priority queueing. In the papers
mentioned above the $d$-type Galton--Watson process has been used, where the process was
defined either by the sizes of the active user populations of the $d$ service classes,
or by the length of the $d$ priority queues.
For the general theory and applications of multi-type Galton--Watson processes
we refer to Mode \cite{mode} and Haccou et al.~\cite{HJV}.

The INAR time series as a particular case of GWI processes with Bernoulli offspring
distribution have been investigated by several authors, see e.g. the survey of
Wei\ss{} \cite{We08}. Heterogeneous INAR(1) models have been considered by
B\"ockenholt \cite{Bo99} for understanding and predicting consumers' buying
behavior, and Gourieroux and Jasiak \cite{GoJa04} for modeling the premium
in bonus-malus scheme of car insurance. Note that the higher order 
$\textrm{INAR}(p)$ times series introduced by Du and Li \cite{DuLi91} has
state space representation by a multivariate $\textrm{INAR}(1)$ model which
is a particular case of the multi-type GWI process, see Franke and Subba Rao
\cite{FrSu95}.

The paper is organized as follows.
In Section \ref{section:general} general sufficient conditions are
given for the mean matrices $\bB_n$
to get a non-trivial limit distribution for the sequence $\bX_n$.
In Section \ref{section:special} we spell out the general theorems
for some special cases of the mean matrices. We investigate here
the case when the limit matrix $\bB = \bI$, and when $\bB_n = \varrho_n \bB$.
The proofs are gathered in Section \ref{section:proofs}.

\section{General results} \label{section:general}

First we introduce some notation.
Boldface lower case letters $\bx, \by, \bk, \bell, \bmm, \blambda$
stand for $d$-dimensional
(row) vectors, boldface  upper case letters $\bA, \bB$ stand for
$d \times d$ real matrices, $(\bx)_i$ is the $i^\text{th}$ element of $\bx$,
$(\bA)_{i,j}$ is the element of $\bA$ in the $i^\text{th}$ row and $j^\text{th}$ column.
For the usual basis in $\RR^d$ we use the notation
\[
\be_1 = (1, 0, \ldots, 0 ), \ldots,
\be_d = (0,0, \ldots, 1),
\]
and for the constant zero and constant one vector we put
\[
\bzero =  (0, 0, \ldots, 0 ), \ 
\bbone =  (1, 1, \ldots, 1 ).
\]
Inequalities between vectors, and between matrices are meant elementwise.
For a vector $\bx \in \RR^d$ its norm is denoted by
$\| \bx \|$, where the norm is an arbitrary norm on the linear space $\RR^d$.
As an abuse of notation $\| \bA \|$ is the operator-norm of the matrix $\bA$,
induced by the norm $\| \cdot \|$ on the linear space $\RR^d$,
i.e.~$\| \bA \| = \sup_{\bx \, : \, \| \bx \| \leq 1} \| \bx \bA \|$.
Therefore, all the following statements are meant as: If there exist a
norm $\| \cdot \|$, such that the conditions of the statement hold with that
norm, then the conclusion holds. See the example after Proposition \ref{Xmean}.

The distribution of a random vector $\bxi$ will be denoted by
$\cL(\bxi)$.
For $\bp = (p_1, \ldots, p_d ) \in [0,1]^d$ with
$p_1 + \ldots + p_d \leq 1$, let $\Be(\bp)$  denote the
$d$-dimensional Bernoulli distribution with means $p_1, \dots, p_d$
defined by
\[
\Be(\bp)( \{ \be_1 \} ) = p_1 , \quad\dots,\quad
\Be(\bp)( \{ \be_d \} ) = p_d , \qquad
\Be(\bp)( \{ \bzero \} ) = 1 - p_1 - \ldots - p_d .
\]
If $\bxi = (\xi_1,\dots,\xi_d)$ is a random vector with
$\cL(\bxi) = \Be(\bp)$ then $\xi_1,\dots,\xi_d$ are random variables
with $\cL(\xi_i) = \Be(p_i)$, $i \in \{1,\dots,d\}$
(thus $\EE \bxi = \bp$), but $\xi_1,\dots,\xi_d$ are \emph{not} independent,
hence $\Be(\bp) \ne \Be(p_1) \times \ldots \times \Be(p_d)$.

When the offspring distributions are Bernoulli, each particle has
at most one offspring.
In this case $(\bX_n)_{n\in\ZZ_+}$ is an inhomogeneous INAR process, such
that $\cL(\bxi_{n,1,i}) = \Be(\be_i \bB_n)$. Note that in this case $\bB_n$
is substochastic matrix.

For $\blambda = (\lambda_1, \ldots, \lambda_d) \in [0, \infty)^d$, the
$d$-dimensional Poisson distribution with parameter $\blambda$ is
defined by
$\Po(\blambda) := \Po(\lambda_1) \times \ldots \times \Po(\lambda_d)$.
In other words, $\bxi = (\xi_1,\dots,\xi_d)$ is a random vector with
$\cL(\bxi) = \Po(\blambda)$ whenever $\xi_1,\dots,\xi_d$ are
independent random variables with $\cL(\xi_i) = \Po(\lambda_i)$,
$i \in \{1,\ldots,d\}$.
Note that $\EE \bxi = \blambda$.

Introduce the generating functions
\begin{equation} \label{eq:genfc-def}
\begin{split}
& F_n(\bx) = \EE \bx^{\bX_n}, \quad G_{n,i}(\bx) = \EE \bx^{\bxi_{n,1,i}}, \quad
\bG_n(\bx) = ( G_{n,1}(\bx), \ldots, G_{n,d}(\bx) ), \\
& H_n(\bx) = \EE \bx^{\bvare_n}, \quad \bx \in [0,1]^d,
\end{split}
\end{equation}
where $\bx^{\bk} = x_1^{k_1} \ldots x_d^{k_d}$.
Conditioning argument shows
the recursion $F_n(\bx) = F_{n-1}(\bG_n(\bx)) H_n(\bx)$, $n \geq 2$.
Let denote $\overline \bG_{n+1,n}(\bx) = \bx$, and if $\overline \bG_{j+1,n}$
is defined then $\overline \bG_{j,n}(\bx) =  \bG_j (\overline \bG_{j+1,n}(\bx))$.
With this notation Quine \cite{Quine} proved 
(simple induction argument shows) that we have
\begin{equation} \label{eq:genfc-F}
F_n(\bx) = \prod_{j=1}^n H_j ( \overline \bG_{j+1,n}(\bx)).
\end{equation}

It turns out that due to the near-criticality under general
conditions
$H_j ( \overline \bG_{j+1,n}(\bx)) \approx 1$, for each $j$, thus
\[
H_j ( \overline \bG_{j+1,n}(\bx)) \approx 
\exp \left\{  H_j ( \overline \bG_{j+1,n}(\bx) ) - 1  \right\},
\]
therefore
it is reasonable to define the accompanying compound Poisson probability generating function
\begin{equation} \label{eq:acc-Poi}
\widetilde F_n(\bx) = \exp 
\left\{
\sum_{j=1}^n 
\left[ H_j ( \overline \bG_{j+1,n}(\bx)) - 1 \right] \right\}.
\end{equation}
We prove in Lemma \ref{lemma:F-tildeF} that under some conditions
\[
\lim_{n\to \infty} ( F_n(\bx) - \widetilde F_n(\bx) ) = 0.
\]
Therefore to determine the asymptotic properties of $\bX_n$ we have to
investigate the sum
\[
\sum_{j=1}^n \left[ H_j (\overline \bG_{j+1,n}(\bx )) - 1 \right].
\]

We can compute explicitly the generating function when both the immigration
and the offsprings have Bernoulli distribution. Indeed,
when ${\cal L}(\bvare_n) = \textrm{Be}(\bmm_n)$, the immigration generating function is
\[
H_n ( \bx) = \EE \bx^{\bvare_n} = m_{n,1} x_1 + \ldots + m_{n,d} x_d
+ 1 - (m_{n,1} + \ldots + m_{n,d}) = 1 + (\bx - \bbone) \bmm_n ^\top;
\]
when the offspring distributions are also Bernoulli then
$\bG_n(\bx) = \bbone + (\bx - \bbone) \bB_n^\top$, and so
$\overline \bG_{j+1,n}(\bx) = \bbone + (\bx - \bbone) \bB_{[j,n]}^\top$,
where
\[
\bB_{[j,n]} := \begin{cases}
\bB_{j+1} \ldots \bB_n,
& \text{for $0 \leq j \leq n-1$,} \\
\bI, & \text{for $j=n$}.
\end{cases}
\]
Note that in this paper the multivariate Bernoulli distribution is defined in
such a way that its generating function is a first order polynomial which is
a particular case of a more general definition of the multivariate Bernoulli
distribution, see Krummenauer \cite[Definition 1]{Kr98}.
Thus (\ref{eq:acc-Poi}) reads as
\begin{equation*} \label{eq:F_n-form-Be-imm}
\widetilde F_n ( \bx ) = \exp \left\{ 
\sum_{j=1}^n \left[ (\bx - \bbone) \bB_{[j,n]}^\top \bmm_j^\top \right]
\right\}.
\end{equation*}

Observe that the recursion \eqref{rec} implies
\begin{equation}\label{EX_n}
\EE \bX_n = \sum_{j=1}^n \bmm_j \, \bB_{[j,n]} .
\end{equation}
This can be obtained also by differentiating $F_n$ in (\ref{eq:genfc-F}).
Putting
\begin{equation} \label{eq:A-def}
\bA_{j,n} = ( \bB - \bB_j ) \bB_{[j,n]},
\qquad n \in \NN , \quad j \in \{ 1, \dots, n \},
\end{equation}
we may rewrite (\ref{EX_n}) as
\[
\EE \bX_n = \sum_{j=1}^n  \bmm_j (\bB - \bB_j)^{-1}  \bA_{j,n},
\]
whenever the inverse $(\bB - \bB_j)^{-1}$ exists for each $j=1,\ldots, n$.

These computations shows the necessity for  a summability
method defined by the offspring mean matrices.
We will make of use of the following matrix version of Toeplitz theorem
(see, e.g., in Fritz \cite{Fritz}).
\begin{lemma} \label{Toeplitz}
Let $\bA_{j,n} \in \RR^{d \times d}$, $n \in \NN$, $j=1,2, \ldots, n$ be matrices such that
\begin{eqnarray}
&& \lim_{ n \to \infty} 
\max_{1 \leq j \leq n} \|\bA_{j,n}\| = 0,  \label{T1} \\
&& \lim_{ n \to \infty} \sum_{j=1}^n \bA_{j,n} = \bA, \label{T2} \\
&& \sup_{n \in \NN} \sum_{j=1}^n \| \bA_{j,n} \| < \infty. \label{T3}
\end{eqnarray}
Then for any convergent sequence of vectors $\bx_n \to \bx$
$$
\sum_{j=1}^n  \bx_j \bA_{j,n} \to \bx \bA .
$$
\end{lemma}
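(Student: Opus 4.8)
The plan is to mimic the classical scalar proof of the Toeplitz summability theorem, splitting $\sum_{j=1}^n \bx_j \bA_{j,n}$ into a ``main'' part handled by \eqref{T2} and a ``remainder'' part controlled by \eqref{T1} and \eqref{T3}. First I would write
\[
\sum_{j=1}^n \bx_j \bA_{j,n}
= \sum_{j=1}^n (\bx_j - \bx)\,\bA_{j,n} + \bx \sum_{j=1}^n \bA_{j,n},
\]
and note that the second term tends to $\bx\bA$ by \eqref{T2}. Hence it suffices to show that the first term tends to $\bzero$.

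Fix $\vare > 0$. Since $\bx_j \to \bx$, choose $N$ with $\|\bx_j - \bx\| < \vare$ for all $j > N$, and split, using that the induced operator norm is compatible with the vector norm (so $\|\bu\bA\| \le \|\bu\|\,\|\bA\|$),
\[
\Bigl\| \sum_{j=1}^n (\bx_j - \bx)\,\bA_{j,n} \Bigr\|
\le \sum_{j=1}^{N} \|\bx_j - \bx\|\,\|\bA_{j,n}\|
 + \sum_{j=N+1}^{n} \|\bx_j - \bx\|\,\|\bA_{j,n}\|.
\]
For the first (finite) sum, with $N$ now fixed, bound $\|\bx_j - \bx\|$ by $C_N := \max_{1 \le j \le N}\|\bx_j - \bx\|$ and each $\|\bA_{j,n}\|$ by $\max_{1 \le j \le n}\|\bA_{j,n}\|$; by \eqref{T1} this part is at most $N\,C_N\,\max_{1 \le j \le n}\|\bA_{j,n}\| \to 0$ as $n \to \infty$. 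For the second sum, bound it by $\vare \sum_{j=1}^n \|\bA_{j,n}\| \le \vare \sup_{n}\sum_{j=1}^n\|\bA_{j,n}\| = \vare M$ with $M < \infty$ by \eqref{T3}.

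Combining these estimates gives $\limsup_{n\to\infty} \bigl\| \sum_{j=1}^n (\bx_j - \bx)\,\bA_{j,n} \bigr\| \le \vare M$, and letting $\vare \downarrow 0$ shows this term vanishes, which completes the argument. There is no genuine obstacle here; the only points needing a little care are the compatibility of the chosen vector norm with the operator norm it induces, and the order of quantifiers---$N$ must be fixed before sending $n \to \infty$ in the finite sum, which is exactly why the uniform hypothesis \eqref{T1} (rather than mere entrywise convergence $\bA_{j,n} \to \bzero$ for fixed $j$) is used.
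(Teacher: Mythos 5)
Your proof is correct, and it is the standard Toeplitz argument; the paper itself does not prove this lemma but only cites Fritz for it, so your write-up supplies exactly the omitted routine verification. The decomposition into $\sum_j(\bx_j-\bx)\bA_{j,n}$ plus $\bx\sum_j\bA_{j,n}$, the use of \eqref{T2} for the second term, and the $\vare$--$N$ split of the first term controlled by \eqref{T1} and \eqref{T3} are all sound, and you correctly note the compatibility $\|\bu\bA\|\le\|\bu\|\,\|\bA\|$ of the induced operator norm.

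One small correction to your closing remark: the uniform hypothesis \eqref{T1} is \emph{not} actually needed for this lemma. With only the pointwise assumption $\|\bA_{j,n}\|\to 0$ for each fixed $j$, the finite sum $\sum_{j=1}^{N}\|\bx_j-\bx\|\,\|\bA_{j,n}\|$ still tends to $0$ as $n\to\infty$, since it has a fixed number $N$ of terms each tending to $0$; the order of quantifiers causes no trouble. The paper makes precisely this point immediately after the lemma: the stronger form \eqref{T1} is imposed not for the Toeplitz lemma itself but because it is needed later, in the proof of Lemma \ref{lemma:F-tildeF}.
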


In fact, the Lemma holds with the weaker assumption
$ \| \bA_{j,n} \| \to 0$, for all $j \in \NN$, instead of (\ref{T1}). However,
for the proof of Lemma \ref{lemma:F-tildeF}
the stronger version is needed.

\begin{lemma} \label{lemma:mean-toeplitz}
Assume that the sequence of mean matrices $( \bB_n )_{n \in \NN}$ satisfies the following
conditions:
\begin{itemize}
\item[\textnormal{(B1)}] $\lim_{n \to \infty} \bB_n = \bB$,
for some 
matrix $\bB$;
\item[\textnormal{(B2)}] $\| \bB_n \| \leq 1$ and $\bB - \bB_n$ is invertible
whenever $n \geq n_0$ for some $n_0$;
\item[\textnormal{(B3)}] $\lim_{n \to \infty} \| \bB_{[j,n]} \| = 0$ for any fixed $j$;
\item[\textnormal{(B4)}] $\lim_{n \to \infty} \sum_{j=1}^n (\bB - \bB_j) \bB_{[j,n]}  = \bA$
for some limit matrix $\bA$;
\item[\textnormal{(B5)}] $\sup_n \sum_{j=1}^n \|(\bB- \bB_j)  \bB_{[j,n]} \| < \infty$.
\end{itemize}
Then the triangular matrix array $( \bA_{j,n} = (\bB - \bB_j) \bB_{[j,n]} )_{j,n}$ satisfies
the conditions of Lemma \ref{Toeplitz}.
\end{lemma}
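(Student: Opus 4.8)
The plan is to verify the three hypotheses (T1), (T2), (T3) of Lemma~\ref{Toeplitz} for the array $\bA_{j,n} = (\bB - \bB_j)\bB_{[j,n]}$, pulling each one out of the conditions (B1)--(B5). Conditions (T2) and (T3) are, respectively, nothing but (B4) and (B5) verbatim, so there is really only one thing to prove: the uniform smallness condition (T1), namely $\max_{1 \le j \le n} \|\bA_{j,n}\| \to 0$. The work therefore concentrates entirely on estimating $\|(\bB - \bB_j)\bB_{[j,n]}\|$ uniformly in $j$.

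For (T1) I would split the range of $j$ into a ``head'' $1 \le j \le K$ and a ``tail'' $K < j \le n$, where $K$ is a large fixed integer chosen at the end. For the head: by (B1) the factors $\bB - \bB_j$ are bounded, say $\|\bB - \bB_j\| \le C$ for all $j$, so $\|\bA_{j,n}\| \le C\|\bB_{[j,n]}\|$; and by (B3), for each of the finitely many $j \le K$ we have $\|\bB_{[j,n]}\| \to 0$ as $n \to \infty$. Hence $\max_{1 \le j \le K}\|\bA_{j,n}\| \to 0$ as $n \to \infty$. For the tail I would use submultiplicativity: for $j > K$, writing $\bB_{[j,n]} = \bB_{j+1}\cdots\bB_n$, peel off the first $K$ of these factors (assuming $n - j \ge K$, which holds once $n$ is large relative to $K$ and $j$ is not too close to $n$) to get $\|\bB_{[j,n]}\| \le \|\bB_{j+1}\cdots\bB_{j+K}\| \cdot \|\bB_{j+K+1}\cdots\bB_n\| \le \|\bB_{j+1}\cdots\bB_{j+K}\|$, using $\|\bB_m\| \le 1$ from (B2) for the trailing block. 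Since $\bB_m \to \bB$ and the product map is continuous, $\bB_{j+1}\cdots\bB_{j+K} \to \bB^K$ uniformly over large $j$, and $\|\bB^K\| \to 0$ because $\varr(\bB) = 1$ is... wait — here I have to be careful: $\varr(\bB) = 1$ does \emph{not} give $\|\bB^K\| \to 0$.

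So the tail needs a genuinely different argument, and this is where the hidden content lies. The right way is not to isolate $\bB^K$ but to exploit (B4)--(B5) directly: the tail terms $\|\bA_{j,n}\|$ for $K < j \le n$ cannot all be large because $\sum_{j} \|\bA_{j,n}\|$ is bounded (B5) and, more to the point, the partial sums $\sum_{j \le m}(\bB - \bB_j)\bB_{[j,n]}$ are controlled through the telescoping identity $(\bB - \bB_j)\bB_{[j,n]} = \bB\,\bB_{[j,n]} - \bB_{[j-1,n]}$, which gives $\sum_{j=k+1}^{n}(\bB - \bB_j)\bB_{[j,n]} = \bB\sum_{j=k+1}^{n}\bB_{[j,n]} - \sum_{j=k+1}^{n}\bB_{[j-1,n]} = (\bB - \bI)\sum_{j=k}^{n-1}\bB_{[j,n]} + \bB\,\bB_{[n,n]} - \bB_{[k,n]}$. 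Combined with (B4) (the full sum converges to $\bA$) and (B3) (so $\bB_{[k,n]} \to 0$ for fixed $k$, killing the boundary term as $n\to\infty$ after $k$ is fixed), this shows $\sum_{j > k}\bA_{j,n}$ is small for $k$ large, uniformly in $n$; a Cauchy-type argument then forces the individual tail terms $\|\bA_{j,n}\|$ to be uniformly small. I expect this tail estimate — extracting uniform smallness of individual terms from uniform smallness of the tail sum of a non-sign-definite series — to be the main obstacle, since in general summands of a convergent series of matrices need not be small; one must use that $\bB_{[j,n]}$ has a monotone-in-$j$ product structure (each further factor multiplies by something of norm $\le 1$) to close the gap. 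Once both ranges are handled, choosing $K$ large first and then $n$ large establishes (T1), and the lemma follows.
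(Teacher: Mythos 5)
Your identification of (T2) and (T3) with (B4) and (B5) verbatim, and your treatment of the head range $1\le j\le K$ via boundedness of $\bB-\bB_j$ and condition (B3), agree with the paper. The gap is in the tail range $K<j\le n$, and it comes from trying to make the \emph{wrong factor} small. You attempt to show that $\|\bB_{[j,n]}\|$ itself is small for $j>K$, correctly notice that $\varrho(\bB)=1$ blocks the $\|\bB^K\|\to 0$ route, and then fall back on (B4)--(B5) and a telescoping identity. That fallback does not work, for two separate reasons. First, the intermediate claim is false: for any \emph{fixed} $k$, condition (B3) forces $\sum_{j\le k}\bA_{j,n}\to 0$ as $n\to\infty$, so by (B4) the tail sum $\sum_{j>k}\bA_{j,n}$ converges to $\bA$ for every fixed $k$ --- it is not small for large $k$ uniformly in $n$, contrary to what you assert. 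Second, even if tail sums were small, you concede yourself that smallness of a sum of non-sign-definite matrix terms does not give smallness of the individual summands, and the appeal to the ``monotone product structure'' is never turned into an actual argument. So (T1) is not established.

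The intended estimate is much simpler and uses only (B1)--(B3), exactly as the paper's one-line proof indicates. Bound $\|\bA_{j,n}\|\le\|\bB-\bB_j\|\cdot\|\bB_{[j,n]}\|$ and note that by (B2) and submultiplicativity of the operator norm, $\|\bB_{[j,n]}\|\le\prod_{k=j+1}^{n}\|\bB_k\|\le 1$ (the paper reduces to $n_0=1$ at the start of Section 4). Given $\vare>0$, choose $K$ with $\|\bB-\bB_j\|<\vare$ for all $j>K$, which is condition (B1); then $\|\bA_{j,n}\|<\vare$ for every $K<j\le n$, with no need to make $\|\bB_{[j,n]}\|$ small there. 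Combining this with your head estimate for $j\le K$ gives $\limsup_n\max_{1\le j\le n}\|\bA_{j,n}\|\le C\vare$ and hence (T1). In short: in the tail it is the factor $\bB-\bB_j$ that supplies the smallness, while $\bB_{[j,n]}$ only needs to stay bounded by $1$; your proposal inverts these roles and consequently never closes.
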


The following two general theorems give sufficient condition for the convergence
of $\bX_n$. It turns out that in case of Bernoulli offspring and immigration
only conditions (\ref{T1})--(\ref{T3}) have to be assured.
Note that when the offspring distribution is Bernoulli, then the limit matrix $\bB$
in \textnormal{(B1)} is necessarily substochastic.

\begin{theorem} \label{th:INAR}
Let $(\bX_n)_{n \in \ZZ_+}$ be an inhomogeneous GWI process such that
both the offspring and the immigration have Bernoulli distribution and
\textnormal{(B1)}--\textnormal{(B5)} hold.
If
\begin{itemize}
\item[\textnormal{(M)}] $\lim_{n\to \infty} \bmm_n (\bB - \bB_n)^{-1} = \blambda$,
\end{itemize}
then
\[
\bX_n \stackrel{\mathcal{D}}{\longrightarrow} \Po(\blambda \bA),
\]
where $\bA$ is given in \textnormal{(B4)}.
\end{theorem}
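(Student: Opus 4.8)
The plan is to use the Bernoulli structure to write the accompanying compound Poisson generating function $\widetilde F_n$ from \eqref{eq:acc-Poi} in closed form, to show via the matrix Toeplitz lemma that it converges to the probability generating function of $\Po(\blambda\bA)$, and finally to transfer this convergence to $F_n$ through Lemma \ref{lemma:F-tildeF}. \emph{Step 1 (closed form).} As recorded before the theorem, Bernoulli immigration and offspring give $H_j(\bx) = 1 + (\bx-\bbone)\bmm_j^\top$ and $\overline{\bG}_{j+1,n}(\bx) = \bbone + (\bx-\bbone)\bB_{[j,n]}^\top$, so
\[
 H_j\bigl(\overline{\bG}_{j+1,n}(\bx)\bigr) - 1 = (\bx-\bbone)\,\bB_{[j,n]}^\top\bmm_j^\top = (\bx-\bbone)\bigl(\bmm_j\bB_{[j,n]}\bigr)^\top ,
\]
and hence, using \eqref{EX_n},
\[
 \sum_{j=1}^n\Bigl[H_j\bigl(\overline{\bG}_{j+1,n}(\bx)\bigr) - 1\Bigr] = (\bx-\bbone)\Bigl(\sum_{j=1}^n\bmm_j\bB_{[j,n]}\Bigr)^{\!\top} = (\bx-\bbone)(\EE\bX_n)^\top .
\]
Thus the whole problem reduces to showing $\EE\bX_n \to \blambda\bA$.

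\emph{Step 2 (Toeplitz limit of $\EE\bX_n$).} Let $n_0$ be as in (B2), and define $\bx_j := \bmm_j(\bB-\bB_j)^{-1}$ for $j \geq n_0$ and $\bx_j := \bzero$ for $j < n_0$; by (M), $\bx_j \to \blambda$. By Lemma \ref{lemma:mean-toeplitz} the triangular array $\bA_{j,n} = (\bB-\bB_j)\bB_{[j,n]}$ satisfies the hypotheses \eqref{T1}--\eqref{T3} of Lemma \ref{Toeplitz}, whence $\sum_{j=1}^n\bx_j\bA_{j,n} \to \blambda\bA$. Since $\bx_j\bA_{j,n} = \bmm_j\bB_{[j,n]}$ for $j \geq n_0$ and $\sum_{j=1}^{n_0-1}\|\bmm_j\bB_{[j,n]}\| \leq \sum_{j=1}^{n_0-1}\|\bmm_j\|\,\|\bB_{[j,n]}\| \to 0$ by (B3), it follows that $\EE\bX_n = \sum_{j=1}^n\bmm_j\bB_{[j,n]} \to \blambda\bA$. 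Because $\EE\bX_n \geq \bzero$, the limit $\blambda\bA$ has nonnegative coordinates, so it is a legitimate Poisson parameter, and by Step 1
\[
 \widetilde F_n(\bx) \longrightarrow \exp\Bigl\{\sum_{i=1}^d(\blambda\bA)_i(x_i - 1)\Bigr\}, \qquad \bx \in [0,1]^d ,
\]
the right-hand side being exactly $\EE\bx^{\bY}$ for $\bY \sim \Po(\blambda\bA)$.

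\emph{Step 3 (transfer and conclusion).} For Bernoulli immigration the immigration variables are bounded, so whatever moment-type hypothesis enters Lemma \ref{lemma:F-tildeF} is automatically met and (B1)--(B5) suffice to apply it; hence $F_n(\bx) - \widetilde F_n(\bx) \to 0$ for every $\bx \in [0,1]^d$. Combining with Step 2, $F_n(\bx) \to \exp\{\sum_{i=1}^d(\blambda\bA)_i(x_i-1)\}$ pointwise on $[0,1]^d$, and the standard continuity theorem for probability generating functions of $\ZZ_+^d$-valued random vectors yields $\bX_n \distr \Po(\blambda\bA)$.

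The computations in Step 1 and the truncation estimate in Step 2 are routine; the genuine substance is borrowed from Lemma \ref{lemma:mean-toeplitz} (that (B1)--(B5) furnish \eqref{T1}--\eqref{T3}) and from Lemma \ref{lemma:F-tildeF} (that $F_n$ and $\widetilde F_n$ share the same limit). The one point that needs care is the first sentence of Step 3 — verifying that, in the Bernoulli case, conditions (B1)--(B5) really do imply the hypotheses of Lemma \ref{lemma:F-tildeF}, which is precisely the remark made just before the statement of the theorem.
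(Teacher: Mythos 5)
Your proposal is correct and follows essentially the same route as the paper: reduce to the accompanying compound Poisson generating function via Lemma \ref{lemma:F-tildeF}, use the Bernoulli closed form to identify the exponent with $(\bx-\bbone)(\EE\bX_n)^\top$, and obtain $\EE\bX_n=\sum_{j}\bmm_j\bB_{[j,n]}\to\blambda\bA$ from Lemma \ref{Toeplitz} applied to $\bx_j=\bmm_j(\bB-\bB_j)^{-1}$ and $\bA_{j,n}$. The only cosmetic difference is that you spell out the $n_0$-truncation and the nonnegativity of $\blambda\bA$, and your hedge in Step~3 is unnecessary: the hypotheses of Lemma \ref{lemma:F-tildeF} are exactly (B1)--(B5) and (M), all of which the theorem assumes.
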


The Bernoulli distribution of the
offsprings and the immigration is a very restrictive condition.
In the following theorems we  weaken these assumptions.

The interesting new feature in the following theorem is that the 
components of the limit are dependent in general. We need some further
notation.

For a multi-index $\bj = (j_1,\dots,j_d) \in \ZZ_+^d$
let denote $m_{n,\bj}$ the $\bj^\text{th}$ factorial moment of the immigration
$\bvare_n =(\vare_{n,1}, \ldots, \vare_{n,d})$, that is
\[
m_{n,\bj} = 
\EE\left( \prod_{i=1}^d \vare_{n,i} (\vare_{n,i}-1)\ldots(\vare_{n,i}-j_i+1) \right)
= D^{\bj} H_n(\bbone),
\]
where for a multi-index $\bj$
\[
D^{\bj} H_n(\bbone) = \frac{\partial^{|\bj|}}{\partial^{j_1} x_1 \ldots \partial^{j_d} x_d}
H_n(\bbone),
\]
$|\bj| = j_1 + \ldots + j_d$, and the derivatives are meant as the left derivatives.

We cannot circumvent the fairly inconvenient notation below and in Lemma \ref{Taylor}, because
formulas (\ref{eq:imm-assumption-II}) and (\ref{eq:limit-genfc}) are not easily
translated to the multi-index notation.

\begin{theorem} \label{th:general-immigration}
Let $(\bX_n)_{n \in \ZZ_+}$ be an inhomogeneous GWI process with 
Bernoulli offspring distributions, such that
\textnormal{(B1)}--\textnormal{(B5)} hold.
Moreover, assume that for some
$k \geq 2$
\begin{equation} \label{eq:imm-assumption-I}
\lim_{n\to\infty} 
\|(\bB - \bB_n)^{-1}\| \, \max_{|\bj|= k} D^{\bj} H_n(\bbone)  = 0,
\end{equation}
and for each $i=1,2, \ldots, k-1$,
for each $1 \leq \ell_{i+1}, \ldots, \ell_{2i} \leq d$, the limit
\begin{equation} \label{eq:imm-assumption-II}
\lim_{n \to \infty}
\sum_{j=1}^n \frac{1}{i!}
\sum_{\ell_1, \ldots, \ell_i=1}^d
\frac{\partial^i H_j(\bbone)}
{\partial x_{\ell_1} \ldots \partial x_{\ell_i}}
\left( \bB_{[j,n]} \right)_{\ell_1, \ell_{i+1}}
\ldots  \left( \bB_{[j,n]} \right)_{\ell_{i}, \ell_{2i}}
=: \Lambda_{i; \ell_{i+1}, \ldots, \ell_{2i}}
\end{equation}
exists. Then
\[
\bX_n \stackrel{\mathcal{D}}{\longrightarrow} \bY,
\]
where
\begin{equation} \label{eq:limit-genfc}
\EE \bx^{\bY} = \exp \left\{
\sum_{i=1}^{k-1} 
\sum_{\ell_{i+1}, \ldots, \ell_{2i} = 1}^d
\Lambda_{i; \ell_{i+1}, \ldots, \ell_{2i}}
(x_{\ell_{i+1}} - 1) \ldots (x_{\ell_{2i}} - 1)
\right\}.
\end{equation}
\end{theorem}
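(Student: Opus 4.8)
\smallskip
\noindent\textbf{Proof plan.}
The plan is to use the Bernoulli offspring assumption to make $\overline\bG_{j+1,n}$ explicit, to reduce via Lemma~\ref{lemma:F-tildeF} to the accompanying compound Poisson generating function, and then to Taylor expand the immigration generating functions around $\bbone$. Since the offsprings are Bernoulli, $\bG_n(\bx)=\bbone+(\bx-\bbone)\bB_n^\top$, hence $\overline\bG_{j+1,n}(\bx)=\bbone+(\bx-\bbone)\bB_{[j,n]}^\top$ for $\bx\in[0,1]^d$. By \eqref{eq:genfc-F}, the definition \eqref{eq:acc-Poi} of $\widetilde F_n$, and Lemma~\ref{lemma:F-tildeF} (whose hypotheses should be guaranteed by the Bernoulli assumption together with (B1)--(B5) and \eqref{eq:imm-assumption-I}), it suffices to prove that for each fixed $\bx\in[0,1]^d$
\[
S_n(\bx):=\sum_{j=1}^n\bigl[H_j(\overline\bG_{j+1,n}(\bx))-1\bigr]\ \longrightarrow\ L(\bx):=\sum_{i=1}^{k-1}\sum_{\ell_{i+1},\dots,\ell_{2i}=1}^d\Lambda_{i;\ell_{i+1},\dots,\ell_{2i}}\,(x_{\ell_{i+1}}-1)\cdots(x_{\ell_{2i}}-1).
\]
Indeed, then $\widetilde F_n(\bx)=\exp S_n(\bx)\to\exp L(\bx)$, so $F_n(\bx)\to\exp L(\bx)$; the right-hand side is a pointwise limit of the probability generating functions $\widetilde F_n$ and is continuous with value $1$ at $\bbone$, hence it is the generating function of some $\ZZ_+^d$-valued vector $\bY$, and the continuity theorem for generating functions yields $\bX_n\distr\bY$.

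For the convergence $S_n\to L$ I would Taylor expand each summand about $\bbone$. Put $\bu_{j,n}:=\overline\bG_{j+1,n}(\bx)-\bbone=(\bx-\bbone)\bB_{[j,n]}^\top$, so $(\bu_{j,n})_\ell=\sum_{m=1}^d(x_m-1)(\bB_{[j,n]})_{\ell,m}\le 0$ and $\|\bu_{j,n}\|\le C\|\bx-\bbone\|\,\|\bB_{[j,n]}\|$. By (B3), $\|\bB_{[j,n]}\|\to 0$ for each fixed $j$, and by (B2), $\|\bB_{[j,n]}\|\le 1$ for $j\ge n_0$. Applying Lemma~\ref{Taylor} (the multivariate Taylor formula for generating functions, with left derivatives at $\bbone$) to order $k-1$,
\[
H_j(\bbone+\bu_{j,n})-1=\sum_{i=1}^{k-1}\frac{1}{i!}\sum_{\ell_1,\dots,\ell_i=1}^d\frac{\partial^i H_j(\bbone)}{\partial x_{\ell_1}\cdots\partial x_{\ell_i}}\,(\bu_{j,n})_{\ell_1}\cdots(\bu_{j,n})_{\ell_i}+R_{j,n}(\bx),
\]
where, since a generating function has nonnegative power-series coefficients and $\bbone+\bu_{j,n}\in[0,1]^d$, the remainder satisfies $|R_{j,n}(\bx)|\le C'\|\bx-\bbone\|^k\bigl(\max_{|\bj|=k}D^{\bj}H_j(\bbone)\bigr)\|\bB_{[j,n]}\|^k$. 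Substituting $(\bu_{j,n})_{\ell_a}=\sum_{\ell_{i+a}=1}^d(x_{\ell_{i+a}}-1)(\bB_{[j,n]})_{\ell_a,\ell_{i+a}}$ into the principal part, interchanging the finite sums, and summing over $j=1,\dots,n$, the coefficient of $(x_{\ell_{i+1}}-1)\cdots(x_{\ell_{2i}}-1)$ becomes precisely the $n$-th partial sum in \eqref{eq:imm-assumption-II}, which converges to $\Lambda_{i;\ell_{i+1},\dots,\ell_{2i}}$; so the sum of principal parts tends to $L(\bx)$.

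It then remains to show $\sum_{j=1}^n|R_{j,n}(\bx)|\to 0$. For the finitely many $j<n_0$ one argues directly that $H_j(\overline\bG_{j+1,n}(\bx))-1\to H_j(\bbone)-1=0$ by (B3) and continuity of $H_j$ (these indices likewise contribute nothing to the limit in \eqref{eq:imm-assumption-II}). For $j\ge n_0$, using $k\ge 2$ and $\|\bB_{[j,n]}\|\le 1$ one has $\|\bB_{[j,n]}\|^k\le\|\bB_{[j,n]}\|\le\|(\bB-\bB_j)^{-1}\|\,\|(\bB-\bB_j)\bB_{[j,n]}\|$, whence
\[
\sum_{j=n_0}^n|R_{j,n}(\bx)|\le C'\|\bx-\bbone\|^k\Bigl(\max_{n_0\le j\le n}\|(\bB-\bB_j)^{-1}\|\,\max_{|\bj|=k}D^{\bj}H_j(\bbone)\Bigr)\sum_{j=n_0}^n\|(\bB-\bB_j)\bB_{[j,n]}\|.
\]
The last sum is bounded in $n$ by (B5), and the parenthesized factor tends to $0$ by \eqref{eq:imm-assumption-I}; hence the bound tends to $0$. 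Combining with the previous paragraph gives $S_n(\bx)\to L(\bx)$, and the proof is complete.

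The main difficulty I anticipate is the remainder estimate: obtaining a clean coefficient-level bound on $R_{j,n}$ — exactly the content of Lemma~\ref{Taylor}, which must exploit that the Taylor remainder of a generating function has nonnegative coefficients and that $\overline\bG_{j+1,n}(\bx)\le\bbone$ — and then threading $\sum_j\|\bB_{[j,n]}\|^k$ through the factorization $\|\bB_{[j,n]}\|\le\|(\bB-\bB_j)^{-1}\|\,\|(\bB-\bB_j)\bB_{[j,n]}\|$ so that (B5) and \eqref{eq:imm-assumption-I} can be applied; here the hypothesis $k\ge 2$ is precisely what lets one factor $\|\bB_{[j,n]}\|$ be kept for (B5) while the rest is bounded by $1$. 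A secondary nuisance will be the multi-index bookkeeping needed to separate the summation indices $\ell_1,\dots,\ell_i$ from the free indices $\ell_{i+1},\dots,\ell_{2i}$, to match the limiting coefficients with \eqref{eq:limit-genfc}, and to verify that the finitely many early indices $j<n_0$ are negligible throughout.
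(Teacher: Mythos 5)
Your route is exactly the paper's: reduce to the accompanying compound Poisson generating function via Lemma \ref{lemma:F-tildeF}, expand each $H_j$ around $\bbone$ with Lemma \ref{Taylor}, identify the principal part with the partial sums in \eqref{eq:imm-assumption-II}, and control the remainder through the factorization $\|\bB_{[j,n]}\|^k\le\|\bB_{[j,n]}\|\le\|(\bB-\bB_j)^{-1}\|\,\|\bA_{j,n}\|$ (valid since $k\ge2$ and $\|\bB_{[j,n]}\|\le1$). Up to that point everything matches.

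The one step that fails as written is your final remainder estimate. You factor out $\max_{n_0\le j\le n}\|(\bB-\bB_j)^{-1}\|\max_{|\bj|=k}D^{\bj}H_j(\bbone)$ and assert that this tends to $0$ by \eqref{eq:imm-assumption-I}. But \eqref{eq:imm-assumption-I} only says that the sequence $c_j:=\|(\bB-\bB_j)^{-1}\|\max_{|\bj|=k}D^{\bj}H_j(\bbone)$ tends to $0$ as $j\to\infty$; the running maximum $\max_{n_0\le j\le n}c_j$ converges to $\sup_{j\ge n_0}c_j$, which is in general a strictly positive constant. So your displayed bound only shows the remainder sum is $O(1)$, not $o(1)$. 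The repair is to keep $c_j$ inside the sum, i.e. bound $\sum_j|R_{j,n}(\bx)|\le C\sum_j c_j\|\bA_{j,n}\|$ (this is precisely the inequality the paper writes), and then apply the scalar Toeplitz argument that the hypotheses are designed for: split at a large $J$, use that for each fixed $j\le J$ one has $\|\bA_{j,n}\|\to0$ by \textnormal{(B3)}, and for $j>J$ bound by $\bigl(\sup_{j>J}c_j\bigr)\sup_n\sum_j\|\bA_{j,n}\|$, which is small by \eqref{eq:imm-assumption-I} and \textnormal{(B5)}. With that correction your argument coincides with the paper's proof. One further point you rightly hedged on: Lemma \ref{lemma:F-tildeF} is stated under \textnormal{(M)}, which is not formally among the hypotheses of Theorem \ref{th:general-immigration}; the paper invokes the lemma anyway, so you would need to either check that the $i=1$ case of \eqref{eq:imm-assumption-II} suffices to run the lemma's proof or note this as an implicit standing assumption — but this is a defect shared with the paper, not a new one you introduced.
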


Note that if (\ref{eq:imm-assumption-II}) holds, then necessarily
$\Lambda_{i;\cdot}$ is symmetric in the sense that for any permutation
$\pi$ we have $\Lambda_{i;\ell_1, \ell_2, \ldots, \ell_i} =
\Lambda_{i; \ell_{\pi 1}, \ell_{\pi 2}, \ldots,  \ell_{\pi i}}$. In particular,
$\Lambda_{2; j,k } = \Lambda_{2; k,j}$, we use this in Example \ref{example:1}.

A simple sufficient condition which guarantees
(\ref{eq:imm-assumption-I}) is that there are at most
$k-1$ immigrants in any generation. The other condition is more difficult to
check, however for $i=1, 2$ we can write it in a simpler form.

For $i=1$ condition (\ref{eq:imm-assumption-II}) is just
the convergence
\[
\sum_{j=1}^n \bmm_j  \bB_{[j,n]} \to \blambda = (\lambda_1, \ldots, \lambda_d), 
\]
with $\Lambda_{1;\ell} = \lambda_\ell$. Since the matrix array $( \bA_{j,n} )$
satisfies the conditions of Lemma \ref{Toeplitz}
we see that the convergence above follows from
condition (M). As a consequence we obtain

\begin{corollary} \label{cor:spec-imm}
Let $(\bX_n)_{n \in \ZZ_+}$ be an inhomogeneous GWI process with 
Bernoulli offspring distributions, such that
\textnormal{(B1)}--\textnormal{(B5)} hold.
Moreover, assume \textnormal{(M)} and
\begin{equation} \label{eq:imm-assumption}
\lim_{n\to\infty} \EE \|\bvare_n\|^2  \, \|(\bB - \bB_n)^{-1}\| = 0 .
\end{equation}
Then
\[
\bX_n \stackrel{\mathcal{D}}{\longrightarrow} \Po(\blambda \bA).
\]
\end{corollary}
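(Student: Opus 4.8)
The plan is to recognize the corollary as the case $k=2$ of Theorem~\ref{th:general-immigration}. Since (B1)--(B5) are already assumed and $k-1=1$, the only things left to check are hypothesis \eqref{eq:imm-assumption-I} with $k=2$ and hypothesis \eqref{eq:imm-assumption-II} for $i=1$; I would derive the former from \eqref{eq:imm-assumption} and the latter from (M) together with the Toeplitz machinery (Lemmas~\ref{Toeplitz} and \ref{lemma:mean-toeplitz}). Once both are in place, it remains only to identify the limiting generating function \eqref{eq:limit-genfc} in the case $k=2$ with that of $\Po(\blambda\bA)$.

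For \eqref{eq:imm-assumption-I} I would argue as follows. For any multi-index $\bj$ with $|\bj|=2$ the quantity $D^{\bj}H_n(\bbone)$ is a second mixed factorial moment of the immigration vector $\bvare_n$ — it is $\EE[\vare_{n,i}(\vare_{n,i}-1)]$ if $\bj=2\be_i$ and $\EE[\vare_{n,i}\vare_{n,\ell}]$ if $\bj=\be_i+\be_\ell$ with $i\ne\ell$ — and in either case it is bounded by $\EE\bigl[\sum_{k=1}^d\vare_{n,k}^2\bigr]$, hence by $C\,\EE\|\bvare_n\|^2$ for a constant $C$ depending only on $d$ and the fixed norm, by equivalence of norms on $\RR^d$. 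Multiplying by $\|(\bB-\bB_n)^{-1}\|$ and invoking \eqref{eq:imm-assumption} then gives \eqref{eq:imm-assumption-I}.

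For \eqref{eq:imm-assumption-II} with $i=1$, note first that $\partial H_j(\bbone)/\partial x_\ell=(\bmm_j)_\ell$, so the $j$-th summand in \eqref{eq:imm-assumption-II} is just the $\ell_2$-th coordinate of $\bmm_j\bB_{[j,n]}$; thus the condition is the convergence of $\sum_{j=1}^n\bmm_j\bB_{[j,n]}$, with $\Lambda_{1;\ell}$ the $\ell$-th coordinate of the limit. By Lemma~\ref{lemma:mean-toeplitz}, under (B1)--(B5) the triangular array $\bA_{j,n}=(\bB-\bB_j)\bB_{[j,n]}$ satisfies \eqref{T1}--\eqref{T3}. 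For $j\ge n_0$, (B2) lets me write $\bmm_j\bB_{[j,n]}=\bigl[\bmm_j(\bB-\bB_j)^{-1}\bigr]\bA_{j,n}$, and I would set $\bx_j:=\bmm_j(\bB-\bB_j)^{-1}$ for $j\ge n_0$ and $\bx_j:=\blambda$ for $j<n_0$, so that $\bx_j\to\blambda$ by (M). Lemma~\ref{Toeplitz} then yields $\sum_{j=1}^n\bx_j\bA_{j,n}\to\blambda\bA$, and correcting by the two finite boundary sums $\sum_{j<n_0}\bx_j\bA_{j,n}$ and $\sum_{j<n_0}\bmm_j\bB_{[j,n]}$, each of which vanishes as $n\to\infty$ because $\bB_{[j,n]}\to\bzero$ for every fixed $j$ by (B3), I would conclude $\sum_{j=1}^n\bmm_j\bB_{[j,n]}\to\blambda\bA$, i.e.\ $\Lambda_{1;\ell}=(\blambda\bA)_\ell$.

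Plugging these into Theorem~\ref{th:general-immigration} with $k=2$, formula \eqref{eq:limit-genfc} collapses (the outer sum retains only the term $i=1$) to $\EE\bx^{\bY}=\exp\{\sum_{\ell=1}^d(\blambda\bA)_\ell(x_\ell-1)\}$, which is exactly the probability generating function of $\Po(\blambda\bA)$, and the corollary follows. I do not anticipate a genuine obstacle, since the analytic substance is entirely contained in Theorem~\ref{th:general-immigration}; the mildly delicate points are the norm-equivalence bound on the second factorial moments in the first step and the careful handling of the finitely many indices $j<n_0$ at which $\bB-\bB_j$ may fail to be invertible in the third.
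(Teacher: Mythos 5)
Your proposal is correct and follows essentially the same route as the paper: the corollary is obtained as the case $k=2$ of Theorem~\ref{th:general-immigration}, with \eqref{eq:imm-assumption-I} supplied by the second-factorial-moment bound $\max_{|\bj|=2}D^{\bj}H_n(\bbone)\leq C\,\EE\|\bvare_n\|^2$ and \eqref{eq:imm-assumption-II} for $i=1$ supplied by (M) together with Lemmas~\ref{lemma:mean-toeplitz} and~\ref{Toeplitz} applied to $\bx_j=\bmm_j(\bB-\bB_j)^{-1}$ and $\bA_{j,n}$. Your explicit treatment of the indices $j<n_0$ and the identification $\Lambda_{1;\ell}=(\blambda\bA)_\ell$ are if anything slightly more careful than the paper's brief remark preceding the corollary.
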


For $i=2$ condition (\ref{eq:imm-assumption-II}) takes the form
\[
\frac{1}{2} \sum_{j=1}^n \bB_{[j,n]}^\top \Delta_j \bB_{[j,n]} \to \bLambda_2,
\]
with $\Lambda_{2; k, \ell} = (\bLambda_2)_{k, \ell}$,
where $\Delta_j = \Delta H_j(\bbone)$ is the Hesse-matrix of the immigration
generating function at $\bbone$; i.e.~$(\Delta H_j(\bbone) )_{k, \ell}
=\frac{\partial^2}{\partial x_k \partial x_\ell} H_j(\bbone)$.

\begin{example} \label{example:1}
The following simple example shows that the limit may have dependent components
even in a simple case. Let $d=2$, ${\cal L}(\bxi_{n,1,i}) = \textrm{Be}((1-n^{-1})
\be_i)$, $i=1,2$, and $\PP(\bvare_n=\bzero)=1-n^{-1}$, $\PP(\bvare_n=\bbone)=n^{-1}$
for all $n\in\NN$, that is in the $n^\text{th}$ generation each particle survives with 
probability $1- n^{-1}$, and with probability $n^{-1}$ a type-1 and a type-2 
particle immigrate together. Then we have
\[
\bB_n = \left(1- \frac{1}{n} \right) \bI, \quad \text{and} \quad
H_n(x_1, x_2) = 1 - \frac{1}{n} + \frac{x_1 x_2}{n}.
\]

Clearly condition (\ref{eq:imm-assumption-I}) holds with $k=3$.
The relevant quantities are $\bB=\bI$, $\bmm_n = \frac{1}{n} (1,1)$,
\[
\Delta_n = \frac{1}{n}
\begin{bmatrix}
0 & 1 \\ 1 & 0
\end{bmatrix},
\quad
\bmm_n (\bB - \bB_n )^{-1} = \frac{1}{n} (1,1) n \bI  = (1,1),
\]
and
\[
\sum_{j=1}^n \bB_{[j,n]}^\top \Delta_j \bB_{[j,n]} =
\sum_{j=1}^n \frac{j}{n^2} \begin{bmatrix}
0 & 1 \\ 1 & 0
\end{bmatrix} \to \frac{1}{2} \begin{bmatrix}
0 & 1 \\ 1 & 0
\end{bmatrix}.
\]
We see that $\Lambda_{1;1} = \Lambda_{1;2} = 1$ and $\Lambda_{2;1,2} = \Lambda_{2;2,1}= 1/4$,
$\Lambda_{2; 1,1} = \Lambda_{2;2,2} = 0$.
Thus
\[
\bX_n \stackrel{\mathcal{D}}{\longrightarrow} \bY, \quad
\text{ where } \quad
\EE \bx^{\bY} = \exp \left\{ x_1 -1 + x_2 -1 + \frac{(x_1 -1)(x_2 -1)}{2} 
\right\}.
\]
Let $U, V, W$ be independent Poisson random variables with parameters
$\lambda_1, \lambda_2, \mu$, respectively.
The generating function of $(U+W, V+W)$ is given by
\[
\begin{split}
\EE x_1^{U+ W} x_2^{V + W}
& = \EE x_1^U \EE x_2^V \EE (x_1 x_2)^W \\
& =
\exp \left\{ \lambda_1 (x_1 -1) + \lambda_2(x_2 -1) + \mu (x_1 x_2-1)\right\},
\end{split}
\]
therefore the distribution of the limit $Y$ is the distribution of the vector
$(U+W, V + W)$ where $U, V, W$ are iid Poisson($1/2$).
The distribution is called bivariate Poisson distribution,
with parameters $\lambda_1$, $\lambda_2$, and $\mu$, 
see Johnson et al. \cite{JKB} p.124, or
Kocherlakota et al. \cite{KoKo92}.
\end{example}

In general, when in the exponent in (\ref{eq:limit-genfc})
none of the terms are divisible with $x_i^2$ for any $i$
(e.g.~at most 1 particle immigrates for any given type),
then the components of the limit $Y$ in Theorem \ref{th:general-immigration}
can be represented as the sum of
independent Poisson random variables. Assume that the conditions of
Theorem \ref{th:general-immigration} hold, with $k=3$ in (\ref{eq:imm-assumption-I}),
and for the limits in (\ref{eq:imm-assumption-II})
$\Lambda_{2;i,i} = 0$ for all $i$. Then the limit random vector
$\bY = (Y_1, \ldots, Y_d)$ can be represented as
\[
Y_i= U_i + \sum_{j \ne i} U_{i,j}, \quad i=1,\ldots, d,
\]
where $( U_i )_{i=1}^d$ and $( U_{i,j} )_{1 \leq i < j \leq d}$ are
independent Poisson random variables, with parameters $a_i$ and $a_{i,j}$
respectively, with
\[
a_i = \Lambda_{1;i} - 2 \sum_{j=1}^d \Lambda_{2;i,j}, \ \ i=1, \ldots, d, 
\quad
a_{i,j} = 2 \Lambda_{2;i,j}, \ \ 1 \leq i < j \leq d,
\]
and $U_{i,j} := U_{j,i}$ for $i > j$. 
It is not difficult to show that (\ref{eq:imm-assumption-I}) with $k=3$ and
(\ref{eq:imm-assumption-II}) imply that the coefficients above are non-negative.
Simple computation shows that the 
generating function of $\bY$ agrees with the one given in Theorem \ref{th:general-immigration}.
Clearly, this construction extends for $k \geq 3$. The appearing limiting distributions
are the so-called multivariate Poisson distributions; for further properties
see \cite{JKB}, p.139. Note that this multivariate Poisson distribution
appears as a limit in the multivariate version of the law of small numbers, see
Krummenauer \cite[Theorem 1]{Kr98}. Hence, Theorem \ref{th:general-immigration}
can be interpreted as a general law of small numbers for inhomogeneous
GWI processes.
Also note the difference between the multivariate Poisson distribution introduced here
and the $d$-dimensional Poisson distribution defined before (\ref{eq:genfc-def}).

In the next theorem the condition on the offspring distribution is
relaxed, though (\ref{eq:offs-assumption})
means that the offspring distribution
has to be very close to a Bernoulli distribution.
Note that in this case we assume that the limit matrix is
the unit matrix $\bI$, in which case condition \textnormal{(B4)}
automatically holds, with limit matrix $\bA = \bI$.
We return to this question in Subsection 3.1.
To state the theorem we introduce the notation
\begin{equation} \label{eq:def-m2}
m_2(n) = \max_{1 \leq i,j,l \leq d}
\frac{\partial^2}{\partial x_j \partial x_l}
G_{n,i}(\bbone).
\end{equation}

\begin{theorem} \label{th:general-offspring}
Let $(\bX_n)_{n \in \ZZ_+}$ be an inhomogeneous GWI process with 
Bernoulli immigration, such that $\bB= \bI$ and 
\textnormal{(B1)}--\textnormal{(B5)} hold.
Moreover, assume \textnormal{(M)} and
\begin{equation} \label{eq:offs-assumption}
\lim_{n \to \infty} m_2(n)  \|(\bI - \bB_n)^{-1} \| = 0.
\end{equation}
Then
\[
\bX_n \stackrel{\mathcal{D}}{\longrightarrow} \Po(\blambda).
\]
\end{theorem}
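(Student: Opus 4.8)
The plan is to follow the same scheme as for Theorems \ref{th:INAR} and \ref{th:general-immigration}: reduce the problem to the accompanying compound Poisson generating function $\widetilde F_n$ via Lemma \ref{lemma:F-tildeF}, and then compute the limit of the exponent
\[
\sum_{j=1}^n \left[ H_j(\overline\bG_{j+1,n}(\bx)) - 1 \right].
\]
Since the immigration is Bernoulli with mean $\bmm_j$, we have $H_j(\by) = 1 + (\by - \bbone)\bmm_j^\top$ exactly, so the exponent equals $\sum_{j=1}^n (\overline\bG_{j+1,n}(\bx) - \bbone)\bmm_j^\top$. The whole difficulty is therefore to control the iterated offspring generating functions $\overline\bG_{j+1,n}(\bx)$ when the offspring laws are only \emph{close} to Bernoulli, and to show that the second-order correction captured by $m_2(n)$ is negligible under \eqref{eq:offs-assumption}, so that asymptotically $\overline\bG_{j+1,n}(\bx) - \bbone \approx (\bx - \bbone)\bB_{[j,n]}^\top$ just as in the genuinely Bernoulli case.

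First I would write $\bG_n(\bx) = \bbone + (\bx - \bbone)\bB_n^\top + \bR_n(\bx)$, where the remainder $\bR_n(\bx)$ collects the second- and higher-order terms of the Taylor expansion of $\bG_n$ at $\bbone$; by \eqref{eq:def-m2} and the fact that $\bx \in [0,1]^d$, each component of $\bR_n(\bx)$ is nonnegative and bounded by a constant times $m_2(n)\|\bx - \bbone\|^2$ (a Taylor-with-remainder estimate, using that the generating function is convex and monotone so the quadratic term dominates the tail). Next I would iterate: set $\bu_{j,n} := \overline\bG_{j+1,n}(\bx) - \bbone$ and derive the recursion $\bu_{j-1,n} = \bu_{j,n}\bB_j^\top + \bR_j(\bbone + \bu_{j,n})$, run backward from $\bu_{n,n} = \bx - \bbone$. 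Unrolling this gives
\[
\overline\bG_{j+1,n}(\bx) - \bbone
= (\bx - \bbone)\bB_{[j,n]}^\top
+ \sum_{r=j+1}^{n} \bR_r\!\left(\overline\bG_{r+1,n}(\bx)\right)\bB_{[j,r-1]}^\top .
\]
The main obstacle is bounding the accumulated error term $\sum_{j=1}^n \big| \sum_{r=j+1}^n \bR_r(\overline\bG_{r+1,n}(\bx))\bB_{[j,r-1]}^\top \bmm_j^\top \big|$. Here I would use $\|\bB_r\| \le 1$ (from (B2)) to get $\|\overline\bG_{r+1,n}(\bx) - \bbone\| \le C$ uniformly, hence $\|\bR_r(\overline\bG_{r+1,n}(\bx))\| \le C' m_2(r)$; then summing over $j$ first, the $\bmm_j$ and $\bB_{[j,r-1]}$ combine, after inserting $(\bB - \bB_j)^{-1}$, into the quantities controlled by (B5) and condition (M), leaving a factor $\max_r m_2(r)\|(\bI - \bB_r)^{-1}\|$ that vanishes by \eqref{eq:offs-assumption}. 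This is the step that needs the most care, because one has to interchange the order of the double sum and match the telescoping products $\bB_{[j,r-1]}\bB_{[r-1,\cdot]}$ correctly; a clean way is to first fix $r$, bound $\sum_{j=1}^{r-1}\|\bB_{[j,r-1]}\bmm_j^\top\| \le \|(\bI-\bB_{r-1})^{-1}\|\cdot(\text{bounded by (M),(B5)})$ — or more simply observe $\sum_{j=1}^{r-1}\bmm_j\bB_{[j,r-1]} = \EE\bX_{r-1}$, whose norm is bounded by \eqref{EX_n} together with (B5) and (M).

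Once the error is shown to be $o(1)$, the exponent reduces to $\sum_{j=1}^n (\bx-\bbone)\bB_{[j,n]}^\top\bmm_j^\top = (\bx - \bbone)\big(\sum_{j=1}^n \bmm_j\bB_{[j,n]}\big)^\top$. By Lemma \ref{lemma:mean-toeplitz} the array $\bA_{j,n} = (\bB - \bB_j)\bB_{[j,n]} = (\bI - \bB_j)\bB_{[j,n]}$ satisfies the hypotheses of the Toeplitz Lemma \ref{Toeplitz}, and since $\bmm_j(\bI - \bB_j)^{-1} \to \blambda$ by (M), we get $\sum_{j=1}^n \bmm_j\bB_{[j,n]} = \sum_{j=1}^n \bmm_j(\bI-\bB_j)^{-1}\bA_{j,n} \to \blambda\bA = \blambda\bI = \blambda$, using that $\bB = \bI$ forces $\bA = \bI$ (as remarked before the theorem, (B4) then holds automatically with $\bA = \bI$; alternatively this is $\sum_j \bA_{j,n} \to \bA$ directly). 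Therefore $\widetilde F_n(\bx) \to \exp\{(\bx-\bbone)\blambda^\top\} = \exp\{\sum_{i=1}^d \lambda_i(x_i - 1)\}$, which is the generating function of $\Po(\blambda)$. Finally, Lemma \ref{lemma:F-tildeF} (whose own hypotheses follow from (B1)--(B5) together with the smallness condition, exactly as in the proof of Theorem \ref{th:general-immigration}) gives $F_n(\bx) - \widetilde F_n(\bx) \to 0$, so $F_n(\bx) \to \exp\{\sum_i \lambda_i(x_i-1)\}$ pointwise on $[0,1]^d$, and the continuity theorem for probability generating functions yields $\bX_n \distr \Po(\blambda)$.
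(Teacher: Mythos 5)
Your overall architecture is legitimate and genuinely different from the paper's. The paper sandwiches $\bbone-\overline \bG_{j+1,n}(\bx)$ between $(\bbone-\bx)\bB_{[j,n]}^\top$ from above and $(\bbone-\bx)\bTheta_{[j,n]}^\top$ from below, where $\bTheta_{j,n}=\nabla\bG_j(\bbone-\bbone\bB_{[j,n]}^\top)$, and then shows by a telescoping argument with $\bC_{j,n}=(\bI-\bB_j)\bTheta_{[j,n]}$ that $\sum_j\bmm_j\bTheta_{[j,n]}$ has the same limit $\blambda$ as $\sum_j\bmm_j\bB_{[j,n]}$; you instead unroll the exact first-order Taylor expansion of each $\bG_r$ and bound the accumulated remainder. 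Both routes reduce to showing that a sum weighted by $m_2(\cdot)$ vanishes, both correctly rely on Lemma \ref{lemma:F-tildeF} (whose proof uses only \eqref{eq:ineq-nabla} and is therefore valid for non-Bernoulli offspring), and your identification of $\sum_{j<r}\bmm_j\bB_{[j,r-1]}$ with $\EE\bX_{r-1}$, bounded via \eqref{EX_n}, Lemma \ref{Toeplitz} and (M), is fine.

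However, the error estimate as you state it does not close. Bounding $\|\boldsymbol{R}_r(\overline \bG_{r+1,n}(\bx))\|\le C'm_2(r)$ from a merely uniform bound on $\|\overline \bG_{r+1,n}(\bx)-\bbone\|$ and then summing over $j$ leaves you with $C''\sum_{r=2}^n m_2(r)$, which need not tend to zero: take $\bB_r=(1-1/r)\bI$ and $m_2(r)=1/(r\log r)$, so that \eqref{eq:offs-assumption} holds while $\sum_r m_2(r)=\infty$. The factor $\|(\bI-\bB_r)^{-1}\|$ you announce at the end never actually appears in the chain of inequalities you describe, and in any case $\max_{r\le n}m_2(r)\|(\bI-\bB_r)^{-1}\|$ does not vanish --- only its tail does. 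The repair is to apply your own quadratic remainder bound at the actual argument: by \eqref{eq:ineq-nabla}, $\|\overline \bG_{r+1,n}(\bx)-\bbone\|\le\|\bbone-\bx\|\,\|\bB_{[r,n]}\|$, hence $\|\boldsymbol{R}_r(\overline \bG_{r+1,n}(\bx))\|\le Cm_2(r)\|\bB_{[r,n]}\|^2\le Cm_2(r)\|(\bI-\bB_r)^{-1}\|\,\|\bA_{r,n}\|$, using $\bB_{[r,n]}=(\bI-\bB_r)^{-1}\bA_{r,n}$ and $\|\bB_{[r,n]}\|\le1$. The coefficients $c_r:=m_2(r)\|(\bI-\bB_r)^{-1}\|$ tend to zero by \eqref{eq:offs-assumption}, and since the array $(\|\bA_{r,n}\|)$ satisfies \eqref{T1} and \eqref{T3}, splitting the sum at a fixed index gives $\sum_r c_r\|\bA_{r,n}\|\to0$. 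This is exactly the mechanism the paper uses at the end of its own proof (there the decisive factor $\bB_{[j,n]}$ comes from $\bTheta_{[j,n]}\le\bB_{[j,n]}$ together with the displacement $\bbone\bB_{[j,n]}^\top$ in the mean value theorem). With this substitution your argument is complete.
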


In the single-type case general immigration distribution is investigated
and convergence to a compound Poisson distribution is proved in
Theorem 4 in \cite{GyorIspPapVar}, and in Theorem 3 in \cite{K}.
In case of more general offspring distribution existence of
negative binomial limit is showed in Theorem 5 in \cite{K}.
However, in our multi-type scenario the computations with general
immigration or (and) with general offspring distribution become
intractably complicated.

Finally, we note that if $\prod_{n=1}^\infty \bB_n$ exists and is
not the $0$ matrix, then the process $\bX_n$ converges when
$\sum_{n=1}^\infty \bmm_n$ is finite. This case can be handled 
similarly as in the single-type scenario in \cite{K}.

\section{Special cases and examples} \label{section:special}

In what follows we investigate some special cases for the
sequence of mean matrices,
and we give sufficient conditions for the existence of the distributional
limit, which are easier to handle than the ones given in
Theorem \ref{th:INAR}.

\subsection{The case $\bB = \bI$}

When the critical limit matrix is the identity matrix then
one of  the most complicated assumption,
(B4) in Theorem \ref{th:INAR}, holds automatically.
In this case $\bA_{j,n} = \bB_{[j,n]} - \bB_{[j-1,n]}$, and so
$\sum_{j=1}^{n} \bA_{j,n}$ is a telescopic sum.

\begin{proposition}\label{Xmean}
Suppose that
\begin{itemize}
\item[\textnormal{(I1)}] 
$\lim_{n\to \infty} \bB_n = \bI$;
\item[\textnormal{(I2)}] 
there is an $n_0$ such that $\|\bB_n\| < 1$ for all $n \geq n_0$;
\item[\textnormal{(I3)}] 
$\lim_{n\to \infty} \| \bB_{[j,n]} \| = 0$ for all $j \in \NN$;
\item[\textnormal{(I4)}] 
there is an $n_0$ such that
$\sup\limits_{n \geq n_0} \frac{\| \bI - \bB_n \|}{1 - \|\bB_n\|}
< \infty$ or $\bA_{j,n} \in \RR_+^{d \times d}$
for all $n \geq n_0$ and all $j \in \{1, \dots, n\}$.
\end{itemize}
Then the triangular matrix array $( \bA_{j,n} )_{j,n}$ satisfies
the conditions of Lemma \ref{Toeplitz}.
\end{proposition}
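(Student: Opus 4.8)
The plan is to verify hypotheses (T1)--(T3) of Lemma~\ref{Toeplitz} for $\bA_{j,n} = (\bI-\bB_j)\bB_{[j,n]}$, using throughout the telescoping identity $\bA_{j,n} = \bB_{[j,n]} - \bB_{[j-1,n]}$ recorded before the statement (valid since $\bB_{[j-1,n]} = \bB_j\bB_{[j,n]}$). I would note at the outset that each of (T1)--(T3) is unchanged if the given norm is replaced by an equivalent one, so one may pass freely, where convenient, to the $\ell_1$-operator norm; the only hypotheses in which the specific norm of (I1)--(I4) genuinely matters are $\|\bB_n\|<1$ in (I2) and the boundedness of $\frac{\|\bI-\bB_n\|}{1-\|\bB_n\|}$ in (I4).

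For (T2) I would simply sum the telescoping identity: it collapses to $\sum_{j=1}^n \bA_{j,n} = \bB_{[n,n]} - \bB_{[0,n]} = \bI - \bB_1\cdots\bB_n$, and $\|\bB_1\cdots\bB_n\| \le \|\bB_1\|\,\|\bB_{[1,n]}\| \to 0$ by (I3), so (T2) holds with $\bA=\bI$. For (T1), fix $\vare>0$; by (I1) choose $N\ge n_0$ with $\|\bI-\bB_j\|<\vare$ for $j\ge N$, and note that (I2) plus submultiplicativity give $\|\bB_{[j,n]}\| \le \prod_{k=j+1}^n\|\bB_k\| \le 1$ for $n\ge j\ge n_0$, hence $\|\bA_{j,n}\| \le \|\bI-\bB_j\|\,\|\bB_{[j,n]}\| < \vare$ for all $n\ge j\ge N$. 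For the finitely many $j<N$, $\|\bA_{j,n}\| \le \|\bI-\bB_j\|\,\|\bB_{[j,n]}\| \to 0$ as $n\to\infty$ by (I3), so their maximum is also eventually $<\vare$; this yields $\limsup_n\max_{1\le j\le n}\|\bA_{j,n}\|\le\vare$ and hence (T1).

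Condition (T3) is the one requiring real work, and I would split it according to the two alternatives in (I4). If $C := \sup_{n\ge n_0}\frac{\|\bI-\bB_n\|}{1-\|\bB_n\|} < \infty$, then with $b_k := \|\bB_k\|$ one has, for $n_0\le j\le n$, $\|\bA_{j,n}\| \le \|\bI-\bB_j\|\prod_{k=j+1}^n b_k \le C(1-b_j)\prod_{k=j+1}^n b_k = C\bigl(\prod_{k=j+1}^n b_k - \prod_{k=j}^n b_k\bigr)$, and summing telescopes to $\sum_{j=n_0}^n\|\bA_{j,n}\| \le C\bigl(1-\prod_{k=n_0}^n b_k\bigr)\le C$; the finitely many lower terms $\sum_{j=1}^{n_0-1}\|\bA_{j,n}\|$ are uniformly bounded in $n$ since $\|\bB_{[j,n]}\| \le \bigl(\prod_{k=j+1}^{n_0-1}b_k\bigr)\prod_{k=n_0}^n b_k \le \prod_{k=j+1}^{n_0-1}b_k$ once $n\ge n_0$. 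Under the other alternative, $\bA_{j,n}\ge\bzero$ for $n\ge n_0$ and all $j$; then I would switch to the $\ell_1$-operator norm (which on a nonnegative matrix is at most the sum of all its entries) and use $\bB_{[0,n]}=\bB_1\cdots\bB_n\ge\bzero$ to bound $\sum_{j=1}^n\|\bA_{j,n}\|_1 \le \sum_{j=1}^n\sum_{k,l}(\bA_{j,n})_{k,l} = \sum_{k,l}\bigl(\bI-\bB_{[0,n]}\bigr)_{k,l} \le d$; equivalence of norms then gives (T3) in the original norm, the finitely many $n<n_0$ being trivial.

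The proof is essentially bookkeeping once the telescoping identity is exploited; the main thing to be careful about is which norm is in force --- running the submultiplicative product estimates in the norm of (I2)/(I4) but the nonnegativity estimate in the $\ell_1$-norm --- together with the routine observation that the finitely many initial indices $j<n_0$ (and the finitely many $n<n_0$) contribute only a uniformly bounded amount.
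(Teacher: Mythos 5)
Your proof is correct and follows essentially the same route as the paper: the telescoping identity for (T2), the standard split for (T1), the telescoping product of $\|\bB_k\|$'s for (T3) under the first alternative of (I4), and, under the second alternative, the observation that for nonnegative matrices the sum of norms is controlled by the norm of the (telescoping) sum — which you carry out directly in the $\ell_1$-operator norm where the paper packages it as a separate lemma (Lemma~\ref{L1norm}) via equivalence with the entrywise sum norm. Your explicit treatment of the finitely many indices $j<n_0$ replaces the paper's blanket reduction to $n_0=1$ and is, if anything, slightly more careful.
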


Note that condition \textnormal{(I2)} guarantees the existence of the inverse
$( \bI - \bB_n )^{-1}$ in (M) for $n \geq n_0$.

As we mentioned, the norm can be arbitrary operator norm. It is easy to
construct examples, such that some conditions hold in one operator norm, and
fail in another. For instance, let
\[
\bB_n =
\begin{bmatrix}
1- \frac{1}{n} & \frac{1}{n} \\
0 & 1- \frac{2}{n}
\end{bmatrix}.
\]
Then in column sum norm (induced by the $\ell_\infty$ norm on $\RR^d$)
condition \textnormal{(I2)} and \textnormal{(I4)} hold, while in the 
row sum norm (induced by the $\ell_1$ norm on $\RR^d$) even condition
\textnormal{(I2)} fails.

For jointly diagonalizable offspring mean matrices a better result is
available, namely, condition (I4) above can be omitted.

\begin{proposition}\label{Xmean_diag}
Suppose that  conditions \textnormal{(I1)}--\textnormal{(I3)} of
Proposition \ref{Xmean} hold, and the offspring mean matrices are of the form
\[
\bB_n = \bU \diag(\varrho_{n,1}, \dots, \varrho_{n,d}) \, \bU^\top,
\qquad n \in \NN ,
\]
where $\bU \in \RR^{d \times d}$ is an orthogonal matrix.
Then the triangular matrix array $( \bA_{j,n} )_{j,n}$ satisfies
the conditions of Lemma \ref{Toeplitz}.
\end{proposition}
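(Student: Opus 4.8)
The plan is to verify conditions \eqref{T1}--\eqref{T3} of Lemma~\ref{Toeplitz} directly for the array $\bA_{j,n} = \bB_{[j,n]} - \bB_{[j-1,n]}$, exploiting the simultaneous diagonalization. Write $\bB_{[j,n]} = \bU \bD_{[j,n]} \bU^\top$, where $\bD_{[j,n]} = \diag\bigl(\prod_{k=j+1}^n \varrho_{k,1}, \dots, \prod_{k=j+1}^n \varrho_{k,d}\bigr)$ for $j < n$ and $\bD_{[n,n]} = \bI$. The crucial observation is that since $\bU$ is fixed and orthogonal, boundedness and convergence of the various matrix sums are equivalent to the corresponding scalar statements about the $d$ diagonal sequences $r_{\ell}(j,n) := \prod_{k=j+1}^n \varrho_{k,\ell}$, and we may as well work with the operator norm induced by the Euclidean norm on $\RR^d$, for which $\|\bB_{[j,n]}\| = \max_\ell |r_\ell(j,n)|$ and $\|\bA_{j,n}\| = \max_\ell |r_\ell(j,n) - r_\ell(j-1,n)|$. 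Conditions (I1) and (I2) translate to $\varrho_{n,\ell} \to 1$ and $|\varrho_{n,\ell}| < 1$ eventually (the latter because $\|\bB_n\| = \max_\ell|\varrho_{n,\ell}|$ in the Euclidean operator norm), while (I3) says $r_\ell(j,n) \to 0$ as $n \to \infty$ for each fixed $j$ and each $\ell$.

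Next I would handle each of \eqref{T1}--\eqref{T3} at the scalar level. For \eqref{T2}: for each coordinate $\ell$, the sum $\sum_{j=1}^n \bigl(r_\ell(j,n) - r_\ell(j-1,n)\bigr)$ telescopes to $r_\ell(n,n) - r_\ell(0,n) = 1 - \prod_{k=1}^n \varrho_{k,\ell}$, which converges to $1$ by (I3) applied with $j=0$ (or by noting $\prod_{k=1}^n\varrho_{k,\ell} \to 0$, which follows from $|\varrho_{k,\ell}|<1$ eventually together with $\|\bB_{[0,n]}\|\to0$). Hence $\sum_{j=1}^n \bA_{j,n} = \bU\,\diag(1-\prod_k\varrho_{k,1},\dots)\,\bU^\top \to \bU\bU^\top = \bI$, so $\bA = \bI$ as expected. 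For \eqref{T3}: writing $r_\ell(j,n) - r_\ell(j-1,n) = r_\ell(j,n)(1 - \varrho_{j,\ell})$ for $1 \le j \le n$, and using that eventually $0 \le \varrho_{j,\ell} \le 1$ (here I would note that (I1) forces $\varrho_{j,\ell}$ to be real and positive for large $j$; the finitely many small $j$ contribute a bounded amount and can be absorbed), we get $\sum_{j=1}^n |r_\ell(j,n)(1-\varrho_{j,\ell})| = \sum_{j=1}^n r_\ell(j,n) - r_\ell(j-1,n)$ for the tail, which telescopes to something bounded by $1$; more carefully, on the range $j \ge n_0$ the summands are non-negative and telescope, and the finitely many terms $j < n_0$ are uniformly bounded since $r_\ell(j,n) \le \prod_{k=n_0}^n|\varrho_{k,\ell}| \le 1$ times a constant. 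Thus $\sup_n \sum_{j=1}^n \|\bA_{j,n}\| < \infty$, giving \eqref{T3} (and this is exactly where the absence of the counterexample obstruction to (I4) is seen: the sign-definiteness of $1-\varrho_{j,\ell}$ on the diagonal replaces the hypothesis $\bA_{j,n}\in\RR_+^{d\times d}$).

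Finally \eqref{T1}: I need $\max_{1\le j\le n}\|\bA_{j,n}\| \to 0$, i.e.\ $\max_{1\le j\le n}\max_\ell r_\ell(j,n)(1-\varrho_{j,\ell}) \to 0$. Fix $\vare > 0$ and $\ell$. By (I1) choose $N$ so that $1 - \varrho_{j,\ell} < \vare$ for $j \ge N$; for those $j$ the summand is $< \vare \cdot r_\ell(j,n) \le \vare$. For the finitely many $j < N$, the factor $r_\ell(j,n) = \prod_{k=j+1}^n \varrho_{k,\ell}$ tends to $0$ as $n\to\infty$ by (I3), so for $n$ large it is $< \vare$; hence the max over $j < N$ is $< \vare$ for $n$ large. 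Combining, $\limsup_n \max_{1\le j\le n}\|\bA_{j,n}\| \le \vare$, and since $\vare$ was arbitrary we get \eqref{T1}. I expect the only real subtlety — the ``main obstacle'' — to be the careful bookkeeping of the finitely many early indices $j$ (where $\varrho_{j,\ell}$ need not yet be real, positive, or $<1$), ensuring their contributions to \eqref{T1} and \eqref{T3} are controlled purely via (I3); everything else is the telescoping structure made transparent by the common diagonalizing basis $\bU$.
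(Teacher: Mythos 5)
Your proof is correct and takes essentially the same route as the paper: diagonalize $\bB_{[j,n]}$ in the common orthogonal basis, identify $\| \bA_{j,n} \|$ with $\max_{\ell}(1-\varrho_{j,\ell})\varrho_{j+1,\ell}\cdots\varrho_{n,\ell}$ via the spectral radius of a symmetric matrix, and telescope coordinatewise to get \eqref{T3}. The only difference is presentational — the paper checks only \eqref{T3} explicitly, disposing of \eqref{T1}, \eqref{T2} and of the finitely many early indices by the standing reduction to $n_0=1$ made at the start of Section \ref{section:proofs}, whereas you carry out that bookkeeping by hand.
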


As a consequence we obtain that the corresponding versions of Theorem
\ref{th:INAR} and \ref{th:general-immigration} can be stated.
For example the following holds.

\begin{theorem}\label{th:spec_I}
Let $(\bX_n)_{n\in\ZZ_+}$ be an inhomogeneous GWI process with
Bernoulli offspring and immigration distributions.
Assume that either
conditions of Proposition \ref{Xmean} or Proposition \ref{Xmean_diag} are
satisfied, and for the immigration \textnormal{(M)} holds. Then 
\[
\bX_n \distr \Po(\blambda).
\]
\end{theorem}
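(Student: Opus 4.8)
The plan is to derive the statement as a special case of Theorem \ref{th:INAR}. Under the hypotheses of either Proposition \ref{Xmean} or Proposition \ref{Xmean_diag}, the limit matrix in \textnormal{(B1)} is forced to be $\bB = \bI$ (this is condition \textnormal{(I1)}), so I would first check that all of \textnormal{(B1)}--\textnormal{(B5)} hold in this setting. Indeed, \textnormal{(B1)} is exactly \textnormal{(I1)} with $\bB = \bI$, and \textnormal{(B3)} is exactly \textnormal{(I3)}. For \textnormal{(B2)}, condition \textnormal{(I2)} gives $\|\bB_n\| < 1$ for $n \geq n_0$ in the relevant operator norm, which forces $\bB - \bB_n = \bI - \bB_n$ to be invertible there (via the Neumann series $\sum_k \bB_n^k$), as already noted after Proposition \ref{Xmean}. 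Finally, both Propositions conclude precisely that the triangular array $(\bA_{j,n} = (\bB - \bB_j)\bB_{[j,n]})_{j,n}$ satisfies conditions \textnormal{(T1)}--\textnormal{(T3)} of Lemma \ref{Toeplitz}; in particular \textnormal{(T2)} is \textnormal{(B4)} and \textnormal{(T3)} is \textnormal{(B5)}. Together with the assumed \textnormal{(M)}, every hypothesis of Theorem \ref{th:INAR} is then in force.

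The second step is to identify the limit matrix $\bA$ appearing in \textnormal{(B4)}. Because $\bB = \bI$, one has $\bA_{j,n} = (\bI - \bB_j)\bB_{[j,n]} = \bB_{[j,n]} - \bB_j\bB_{[j,n]} = \bB_{[j,n]} - \bB_{[j-1,n]}$, so the partial sums telescope,
\[
\sum_{j=1}^n \bA_{j,n} = \bB_{[n,n]} - \bB_{[0,n]} = \bI - \bB_1\bB_2\cdots\bB_n .
\]
By \textnormal{(I3)} we have $\|\bB_{[1,n]}\| \to 0$, hence $\|\bB_{[0,n]}\| \le \|\bB_1\|\,\|\bB_{[1,n]}\| \to 0$, and therefore $\bA = \lim_{n\to\infty}\sum_{j=1}^n \bA_{j,n} = \bI$.

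Putting these two steps together, Theorem \ref{th:INAR} yields $\bX_n \distr \Po(\blambda\bA) = \Po(\blambda\bI) = \Po(\blambda)$, which is the asserted conclusion. I do not anticipate a genuine obstacle here: the proof is essentially a bookkeeping verification that the abstract conditions \textnormal{(B1)}--\textnormal{(B5)} of Theorem \ref{th:INAR} are implied by the more concrete conditions \textnormal{(I1)}--\textnormal{(I4)} (or the diagonalizability hypothesis) together with the already-established conclusions of Propositions \ref{Xmean} and \ref{Xmean_diag}; the only mildly substantive point is the telescoping computation that pins down $\bA = \bI$.
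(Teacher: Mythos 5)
Your proposal is correct and follows exactly the route the paper intends: Propositions \ref{Xmean} and \ref{Xmean_diag} supply conditions \textnormal{(B1)}--\textnormal{(B5)} (equivalently \eqref{T1}--\eqref{T3}), the telescoping identity $\sum_{j=1}^n \bA_{j,n} = \bI - \bB_{[0,n]} \to \bI$ pins down $\bA = \bI$, and Theorem \ref{th:INAR} with \textnormal{(M)} then gives $\Po(\blambda\bI) = \Po(\blambda)$. The paper leaves this as an immediate consequence, and your write-up is just the careful bookkeeping of that deduction.
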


\begin{remark}\label{special}
The statement of Proposition \ref{Xmean_diag} for the special case
$\bB_n = \varrho_n \bI$, $n \in \NN$, with $\varrho_n \in [0,1]$, $n \in \NN$,
also follows from Proposition \ref{Xmean}; in this case
Theorem \ref{th:spec_I} imply the appropriate results for
one-dimensional inhomogeneous INAR processes due to
Gy\"orfi et al.~\cite{GyorIspPapVar}.
\end{remark}

Note that under the assumption of Proposition \ref{Xmean_diag} conditions
\textnormal{(I1)}--\textnormal{(I3)}
of Proposition \ref{Xmean} are equivalent to
\begin{itemize}
\item[\textnormal{(I1')}] 
$\lim\limits_{n\to \infty} \varrho_{n,i} = 1$ for all
$i\in\{1,\dots,d\}$;
\item[\textnormal{(I2')}] 
$\max\limits_{1 \leq i \leq d} \varrho_{n,i} < 1$ for all $n \geq n_0$;
\item[\textnormal{(I3')}] 
$\prod\limits_{n=j}^\infty \varrho_{n,i} = 0$ for all $j \in \NN$
and all  $i\in\{1,\dots,d\}$,
\end{itemize}
respectively.
Remark that conditions \textnormal{(I3)} and \textnormal{(I3')} are also equivalent to
\begin{itemize}
\item[\textnormal{(I3'')}] 
$\sum\limits_{n=1}^\infty (1-\varrho_{n,i}) = +\infty$ for all
$i\in\{1,\dots,d\}$.
\end{itemize}

The following example shows that Proposition \ref{Xmean_diag} can really
perform better for jointly diagonalizable offspring mean matrices than Proposition
\ref{Xmean}.

\begin{example}
Let $d=2$, $\varrho_{n,1}= 1 - 1/n$, $\varrho_{n,2}= 1- 1/\sqrt{n}$,
\[
\bU = \frac{1}{\sqrt{2}}
\begin{bmatrix} 1 & -1 \\ 1 & 1 \end{bmatrix}, \ \text{hence }
\bB_n = \begin{bmatrix}
1-\frac{\sqrt{n}+1}{2n} & \frac{\sqrt{n}-1}{2n} \\
\frac{\sqrt{n}-1}{2n} & 1-\frac{\sqrt{n}+1}{2n}
\end{bmatrix}.
\]
Then conditions \textnormal{(I1)}--\textnormal{(I3)} of 
Proposition \ref{Xmean} are trivially
satisfied, but condition (I4) of Proposition \ref{Xmean} fails to hold.
Indeed,
\[ 
\bA_{n,n} = \bI - \bB_n
= \begin{bmatrix}
\frac{\sqrt{n}+1}{2n} & -\frac{\sqrt{n}-1}{2n} \\
-\frac{\sqrt{n}-1}{2n} & \frac{\sqrt{n}+1}{2n}
\end{bmatrix}\notin\RR_+^{2\times2}, \quad
\|\bB_n\|=\left\| \diag\left(1-\frac{1}{n},1-\frac{1}{\sqrt{n}} \right)\right\|
=1-\frac{1}{n}
\]
and
\[
\begin{split}
\|\bI-\bB_n\|
& = \left\|\bU \left( \bI- \diag\left( 1-\frac{1}{n},1-\frac{1}{\sqrt{n}}\right) \right)
\bU^\top \right\|
=\left\| \bI-\diag\left( 1-\frac{1}{n},1-\frac{1}{\sqrt{n}} \right) \right\| \\
& = \left\| \diag \left( \frac{1}{n},\frac{1}{\sqrt{n}} \right) \right\|
=\frac{1}{\sqrt{n}}
\end{split}
\]
imply
$\sup\limits_{n \geq n_0} \frac{\| \bI - \bB_n \|}{1 - \|\bB_n\|}
= \infty$.
Here we used the simple fact that the norm of a normal element in a $C^*$-algebra
is equal to its spectral radius.
\end{example}

\subsection{The case $\bB_n = \varrho_n \bB$}

In this subsection we assume that
$\bB_n = \varrho_n \bB$, for all $n \in \NN$,
where $\bB$ is a substochastic matrix, and $\varrho_n < 1, \varrho_n \to 1$ and
$\sum_{n=1}^\infty (1 - \varrho_n) = \infty$.
In this special case $\bB_{[j,n]} = \varrho_{[j,n]} \bB^{n-j}$, with
$\varrho_{[j,n]}=\varrho_{j+1} \ldots \varrho_n$.
Put $a_{j,n}= \varrho_{[j,n]} ( 1 - \varrho_j)$, then
$\bA_{j,n} = \bB_{[j,n]} ( \bB - \bB_j)= a_{j,n} \bB^{n-j+1}.$

To apply Theorem \ref{th:INAR} or \ref{th:general-immigration} in this case, the 
missing condition is again \textnormal{(B4)}.
In the following statement we give a rather general condition for the existence
of the limit matrix. The key point is a slight modification of the proof of
Theorem 5.2.1 in Doob \cite{doob}.

\begin{proposition} \label{prop:doob}
Let $( \varrho_n )_{n \in \NN}$ be  a sequence, such that $\varrho_n < 1$, $\varrho_n \to 1$,
$\sum_{n=1}^\infty ( 1 - \varrho_n) = \infty$ and $(1 - \varrho_n)/ ( 1 - \varrho_{n+1}) \to 1$.
Then  for any matrix $\bB$ such that $\|\bB\| \leq 1$  the limit
$$
\lim_{n \to \infty} \sum_{k=1}^n \varrho_{[k,n]} ( 1 - \varrho_k) \bB^{n-k} = \bA
$$
exists, and $ \bB \bA = \bA \bB = \bA = \bA^2$.
\end{proposition}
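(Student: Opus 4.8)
The plan is to reduce the statement to the scalar recursion governing this sum and then to diagonalise $\bB$ away from the eigenvalue $1$. Write $q_n:=1-\varrho_n>0$ and $S_n:=\sum_{k=1}^n q_k\varrho_{[k,n]}\bB^{n-k}$, with $S_0:=\bzero$, so that we must show $S_n$ converges. First I would record the routine facts: a direct rearrangement of the sum gives the recursion $S_n=\varrho_n\bB\,S_{n-1}+q_n\bI$, and the telescoping identity $q_k\varrho_{[k,n]}=\varrho_{[k,n]}-\varrho_{[k-1,n]}$ gives $\sum_{k=1}^n q_k\varrho_{[k,n]}=1-\prod_{i=1}^n\varrho_i$, which is $\le 1$ and which, since $\sum_i q_i=\infty$, tends to $1$; in particular $\varrho_{[k,n]}\to0$ for each fixed $k$, and $\|S_n\|\le1$ because $\|\bB^m\|\le\|\bB\|^m\le1$.

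Next I would exploit the one structural consequence of $\|\bB\|\le1$, namely power-boundedness: every eigenvalue of $\bB$ lies in $\overline{\DD}$ and every eigenvalue of modulus $1$ is semisimple (a non-trivial Jordan block there would force $\|\bB^m\|$ to be unbounded). Hence there is a $\bB$-invariant splitting $\CC^d=V_1\oplus V_2$, respected by all the $S_n$, with $\bB|_{V_1}$ diagonalisable with unit-modulus spectrum and with the spectral radius of $\bB|_{V_2}$ strictly less than $1$. On $V_2$ pick $\theta\in(0,1)$ and $C$ with $\|(\bB|_{V_2})^m\|\le C\theta^m$; then $\|S_n|_{V_2}\|\le C\sum_{k=1}^n q_k\varrho_{[k,n]}\theta^{n-k}$, and splitting this sum at $k=n-M$ bounds it by $C\theta^M+CM\max_{k>n-M}q_k$, so $\limsup_n\|S_n|_{V_2}\|\le C\theta^M$ for every $M$ and $S_n|_{V_2}\to\bzero$. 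This step, and everything above, does not use the hypothesis $(1-\varrho_n)/(1-\varrho_{n+1})\to1$.

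The heart of the matter is the $V_1$-block. Diagonalising $\bB|_{V_1}=\sum_\mu\mu P_\mu$ over the unit-modulus eigenvalues $\mu$, we get $S_n|_{V_1}=\sum_\mu p_n(\mu)P_\mu$ with $p_n(\mu):=\sum_{k=1}^n q_k\varrho_{[k,n]}\mu^{n-k}$, and the scalar recursion $p_n(\mu)=\varrho_n\mu\,p_{n-1}(\mu)+q_n$. For $\mu=1$ the telescoping already gives $p_n(1)=1-\prod_i\varrho_i\to1$. For $\mu\ne1$ I would introduce the instantaneous fixed point $g_n:=q_n/(1-\varrho_n\mu)$; since $|1-\varrho_n\mu|\ge|1-\mu|-q_n\to|1-\mu|>0$ one has $g_n\to0$ and $g_n=\varrho_n\mu g_n+q_n$, so $\sigma_n:=p_n(\mu)-g_n$ satisfies $\sigma_n=\varrho_n\mu\,\sigma_{n-1}+\varrho_n\mu(g_{n-1}-g_n)$, whence (iterating and using $|\mu|=1$, $\varrho_k\le1$, $\prod_i\varrho_i\to0$) $|\sigma_n|\le\sum_{k}\varrho_{[k,n]}|g_{k-1}-g_k|+o(1)$. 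A one-line computation gives $g_{k-1}-g_k=(q_{k-1}-q_k)(1-\mu)/[(1-\varrho_{k-1}\mu)(1-\varrho_k\mu)]$, so for large $k$ $\ |g_{k-1}-g_k|\le C_\mu|q_{k-1}-q_k|=C_\mu q_k\epsilon_k$ with $\epsilon_k:=|q_{k-1}/q_k-1|$, and $\epsilon_k\to0$ is exactly the hypothesis $(1-\varrho_n)/(1-\varrho_{n+1})\to1$. It then remains to show $\sum_k\varrho_{[k,n]}q_k\epsilon_k\to0$, and this holds because $(q_k\varrho_{[k,n]})_{k=1}^n$ is a sub-probability vector whose mass escapes every fixed window ($\sum_{k\le m}q_k\varrho_{[k,n]}=\varrho_{[m,n]}-\varrho_{[0,n]}\to0$ for fixed $m$): given $\vare>0$ choose $m$ with $\epsilon_k<\vare$ for $k>m$, so the $k>m$ part is $\le\vare$ and the $k\le m$ part is $\le(\max_j\epsilon_j)(\varrho_{[m,n]}-\varrho_{[0,n]})\to0$. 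Hence $\sigma_n\to0$, $p_n(\mu)\to0$ for every $\mu\ne1$.

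Putting the pieces together, $S_n\to\bA$, where $\bA$ is $\bzero$ on $V_2$ and the eigenprojection onto $\ker(\bB-\bI)$ on $V_1$; that is, $\bA$ is the spectral projection of $\bB$ onto $\ker(\bB-\bI)$, necessarily a real matrix as a limit of real matrices. Being a spectral projection, $\bA$ commutes with $\bB$, is idempotent, and $\bB$ restricts to the identity on its range, which yields $\bB\bA=\bA\bB=\bA=\bA^2$. The only genuinely delicate point is the case $|\mu|=1,\ \mu\ne1$ — the scalar estimate $p_n(\mu)\to0$ — and within it the assertion $\sum_k\varrho_{[k,n]}q_k\epsilon_k\to0$; this is where the slow-variation hypothesis enters, and it is the discrete-summability analogue of the Doob argument referred to before the statement.
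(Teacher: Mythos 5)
Your proof is correct, but it takes a genuinely different route from the paper's. The paper deduces Proposition \ref{prop:doob} from the more general Lemma \ref{lemma-Doob}, a matrix Toeplitz statement in the spirit of Doob: boundedness of the partial sums yields a convergent subsequence with limit $\bA$; an Abel summation of the difference between $\sum_j a_{j,n}\bB^{j+1}$ and $\sum_j a_{j,n}\bB^{j}$, controlled by the hypothesis $\sum_j |a_{j+1,n}-a_{j,n}|\to 0$ (this is exactly where $(1-\varrho_n)/(1-\varrho_{n+1})\to 1$ enters), forces $\bB\bA=\bA\bB=\bA$ for every subsequential limit; and the identity $\bigl(\sum_j a_{j,n}\bB^{j}\bigr)\bA=\sum_j a_{j,n}\bA\to\bA$ then shows that all subsequential limits coincide and that $\bA$ is idempotent. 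You instead exploit power-boundedness of $\bB$ to split $\CC^d$ into the semisimple peripheral part and a part of spectral radius less than one, kill the latter by a crude tail estimate, and compute the scalar limits $p_n(\mu)$ eigenvalue by eigenvalue, the slow-variation hypothesis entering only through the bound $|g_{k-1}-g_k|\le C_\mu q_k\epsilon_k$ for the unit-modulus eigenvalues $\mu\ne 1$. The trade-off: your argument identifies the limit explicitly as the spectral projection onto $\ker(\bB-\bI)$ (from which $\bB\bA=\bA\bB=\bA=\bA^2$ is immediate) and makes transparent that the extra hypothesis is needed precisely for the peripheral spectrum away from $1$ --- consistent with the paper's counterexample, where $\bB$ is the transposition matrix with eigenvalue $-1$; the paper's compactness argument is shorter, needs no spectral theory, applies to arbitrary coefficient arrays satisfying (\ref{eq:1d-toeplitz}) plus the bounded-variation condition, and is reused as such in the proof of Theorem \ref{th:spec-2}(b). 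Only cosmetic blemishes remain in your write-up (the iteration of the recursion for $\sigma_n$ produces the weights $\varrho_{[k-1,n]}\le\varrho_{[k,n]}$, and $\epsilon_1$ involves an undefined $q_0$, so the sum should start at $k=2$); neither affects the argument.
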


It will be clear from the proof that whenever $\bB$ is stochastic the limit $\bA$ is
stochastic too.

Note that in the single-type case no additional assumption is needed
on the sequence $( \varrho_n )_{n \in \NN}$, see \cite{GyorIspPapVar} or \cite{K}.
Indeed, the condition $\sum_{n=1}^\infty (1- \varrho_n) = \infty$ implies that the 
numerical triangular array $( a_{j,n} = \varrho_{[j,n]}(1-\varrho_j) )$ satisfies
the following conditions, which are the 1-dimensional analog of (\ref{T1}), (\ref{T2})
and (\ref{T3}):
\begin{equation} \label{eq:1d-toeplitz}
\begin{split}
& \lim_{n \to \infty} \max_{1 \leq j \leq n} a_{j,n} =0, \\
& \lim_{n \to \infty} \sum_{j=1}^n a_{j,n} = 1, \\
& \sup_{n \geq 1} \sum_{j=1}^n | a_{j,n} | < \infty.
\end{split}
\end{equation}

The following example shows that when dealing with matrices the
additional assumption is in fact necessary.

\begin{example}
Let
\[
\varrho_n =
\begin{cases}
1, & \text{if $n$ is odd,}\\
1 - \frac{2}{n}, & \text{if $n$ is even,}
\end{cases}
\]
and
\[
\bB= \begin{bmatrix} 0 & 1 \\ 1 & 0 \end{bmatrix}.
\]
Then $\varrho_{[k,n]}=[k/2] / [n/2]$, where $[ \cdot ]$ stands for the (lower)
integer part, and so
\[
\varrho_{[k,n]} ( 1 - \varrho_k) =
\begin{cases}
0, & \text{if $k$ is odd,}\\
\frac{1}{[n/2]}, & \text{if $k$ is even.}
\end{cases}
\]
Thus we obtain
\[
\sum_{k=1}^{2n} \bA_{k,2n} = \sum_{k=1}^{2n} \bB^{2n-k+1} \varrho_{[k,2n]} (1- \varrho_k)
\to
\begin{bmatrix}
0 & 1 \\ 1 & 0
\end{bmatrix},
\]
while
\[
\sum_{k=1}^{2n+1} \bA_{k,2n+1} = \sum_{k=1}^{2n+1} \bB^{2n-k+2} \varrho_{[k,2n+1]} (1- \varrho_k)
\to
\begin{bmatrix}
1 & 0 \\ 0 & 1
\end{bmatrix},
\]
thus the limit does not exist.
\end{example}

Using Proposition \ref{prop:doob} and Theorem \ref{th:general-immigration}
we obtain the following

\begin{theorem} \label{th:rho_n}
Assume that the mean matrix of the Bernoulli offspring distribution has the form
\[
\bB_n = \varrho_n \bB,
\]
where $\bB$ is an invertible substochastic matrix, and $\varrho_n < 1$, $\varrho_n \to 1$,
$\sum_{k=1}^\infty (1 - \varrho_n) = \infty$  and $(1 - \varrho_n)/ ( 1 - \varrho_{n+1}) \to 1$.
Moreover, assume that for some $k \geq 2$
\[
\lim_{n\to\infty} 
\frac{ \max_{|\bj|= k} D^{\bj} H_n(\bbone)}{1- \varrho_n}  = 0, \quad
\lim_{n \to \infty} \frac{\bmm_n}{1-\varrho_n} = \blambda,
\]
and for each $i=2, \ldots, k-1$, for each
$1 \leq \ell_{i+1}, \ldots, \ell_{2i} \leq d$ the limit
\[
\lim_{n \to \infty}
\sum_{j=1}^n \frac{(\varrho_{[j,n]})^i}{i!}
\sum_{\ell_1, \ldots, \ell_i=1}^d
\frac{\partial^i H_j(\bbone)}
{\partial x_{\ell_1} \ldots \partial x_{\ell_i}}
\left( \bB^{n-j} \right)_{\ell_{1}, \ell_{i+1}}
\ldots  \left( \bB^{n-j} \right)_{\ell_{i}, \ell_{2i}}
=: \Lambda_{i; \ell_{i+1}, \ldots, \ell_{2i}}
\]
exists. Then
\[
\bX_n \stackrel{\mathcal{D}}{\longrightarrow} \bY,
\]
where
\[
\EE \bx^{\bY} = \exp \left\{
(\bx - \bbone ) (\blambda \bA)^\top  +
\sum_{i=2}^{k-1} 
\sum_{\ell_{i+1}, \ldots, \ell_{2i} = 1}^d
\Lambda_{i; \ell_{i+1}, \ldots, \ell_{2i}}
(x_{\ell_{i+1}} - 1) \ldots (x_{\ell_{2i}} - 1)
\right\},
\]
where the matrix $\bA$ is given by Proposition \ref{prop:doob}.
\end{theorem}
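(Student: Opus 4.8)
The plan is to derive Theorem~\ref{th:rho_n} as an application of Theorem~\ref{th:general-immigration}, specialized to the case $\bB_n = \varrho_n\bB$. The first step is to verify that conditions \textnormal{(B1)}--\textnormal{(B5)} hold under the present hypotheses. Conditions \textnormal{(B1)} ($\bB_n\to\bB$) and \textnormal{(B2)} ($\|\bB_n\|=\varrho_n\|\bB\|\le 1$, and $\bB-\bB_n = (1-\varrho_n)\bB$ invertible since $\bB$ is invertible and $\varrho_n<1$) are immediate. Condition \textnormal{(B3)} follows from $\|\bB_{[j,n]}\| = \varrho_{[j,n]}\|\bB^{n-j}\| \le \varrho_{[j,n]}$ together with $\sum_n(1-\varrho_n)=\infty$, which forces $\varrho_{[j,n]}\to 0$ (this is the one-dimensional Toeplitz fact recorded in \eqref{eq:1d-toeplitz}). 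Condition \textnormal{(B4)} is exactly the content of Proposition~\ref{prop:doob}, whose extra hypothesis $(1-\varrho_n)/(1-\varrho_{n+1})\to 1$ is precisely why it appears among the assumptions of the theorem; this produces the limit matrix $\bA$ with $\bB\bA=\bA\bB=\bA=\bA^2$. For \textnormal{(B5)}, I would note $\sum_{j=1}^n\|(\bB-\bB_j)\bB_{[j,n]}\| = \sum_{j=1}^n a_{j,n}\|\bB^{n-j+1}\| \le \sum_{j=1}^n a_{j,n}$, which is bounded by \eqref{eq:1d-toeplitz}.

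Next I would translate the immigration hypotheses of Theorem~\ref{th:general-immigration} into the stated ones. Since $\|(\bB-\bB_n)^{-1}\| = (1-\varrho_n)^{-1}\|\bB^{-1}\|$, condition \eqref{eq:imm-assumption-I} is equivalent to $\max_{|\bj|=k} D^{\bj}H_n(\bbone)/(1-\varrho_n)\to 0$, the first stated limit. For the sum condition \eqref{eq:imm-assumption-II}, substitute $\bB_{[j,n]} = \varrho_{[j,n]}\bB^{n-j}$: each of the $i$ factors $(\bB_{[j,n]})_{\ell_r,\ell_{i+r}}$ contributes a scalar $\varrho_{[j,n]}$, so the product carries $(\varrho_{[j,n]})^i$ out front, giving exactly the displayed limit defining $\Lambda_{i;\ell_{i+1},\dots,\ell_{2i}}$ for $i=2,\dots,k-1$. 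The case $i=1$ is handled separately via condition \textnormal{(M)}: here $\bmm_n(\bB-\bB_n)^{-1} = \bmm_n(1-\varrho_n)^{-1}\bB^{-1} \to \blambda\bB^{-1}$ by the stated limit $\bmm_n/(1-\varrho_n)\to\blambda$, so \textnormal{(M)} holds with limit vector $\blambda\bB^{-1}$; then, as explained in the paragraph preceding Corollary~\ref{cor:spec-imm}, Lemma~\ref{Toeplitz} applied to the array $(\bA_{j,n})$ gives $\Lambda_{1;\ell}$ as the $\ell$th coordinate of $(\blambda\bB^{-1})\bA = \blambda\bB^{-1}\bA$. Since $\bB\bA=\bA$ implies $\bB^{-1}\bA = \bB^{-1}\bB\bA$... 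I must be slightly careful: from $\bB\bA=\bA$ we get $\bA = \bB^{-1}\bA$ only after left-multiplying by $\bB^{-1}$, which is valid, so indeed $\bB^{-1}\bA=\bA$ and thus $\Lambda_{1;\cdot}$ corresponds to $\blambda\bA$. This is exactly the vector appearing in the $(\bx-\bbone)(\blambda\bA)^\top$ term of the limiting generating function.

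Finally, with all hypotheses of Theorem~\ref{th:general-immigration} verified, that theorem yields $\bX_n\distr\bY$ with $\EE\bx^{\bY}$ given by \eqref{eq:limit-genfc}. It remains to observe that the $i=1$ summand there, namely $\sum_{\ell=1}^d\Lambda_{1;\ell}(x_\ell-1)$, equals $(\bx-\bbone)(\blambda\bA)^\top$ by the identification of the previous step, so the exponent matches the one claimed in the theorem. The main obstacle I anticipate is not any single deep step but the bookkeeping in the reduction of \eqref{eq:imm-assumption-II}: one must check that pulling the scalars $\varrho_{[j,n]}$ out of the matrix entries is done consistently across all $i$ indices and that the $i=1$ term is correctly routed through \textnormal{(M)} and Proposition~\ref{prop:doob} rather than through the general sum (which would a priori only give $\blambda\bB^{-1}\bA$ and requires the idempotency/absorption identities $\bB\bA=\bA$, $\bB^{-1}\bA=\bA$ to simplify). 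Everything else is a direct citation of Proposition~\ref{prop:doob}, Lemma~\ref{Toeplitz}, and Theorem~\ref{th:general-immigration}.
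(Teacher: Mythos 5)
Your proposal is correct and follows exactly the route the paper intends: the paper gives no written proof of Theorem \ref{th:rho_n} beyond the remark that it follows from Proposition \ref{prop:doob} and Theorem \ref{th:general-immigration}, and your verification of (B1)--(B5), the reduction of \eqref{eq:imm-assumption-I}--\eqref{eq:imm-assumption-II} via $\bB_{[j,n]}=\varrho_{[j,n]}\bB^{n-j}$, and the identification $\blambda\bB^{-1}\bA=\blambda\bA$ using $\bB\bA=\bA$ supply precisely the omitted details (including the harmless shift between $\bB^{n-j}$ in Proposition \ref{prop:doob} and $\bB^{n-j+1}$ in $\bA_{j,n}$, absorbed by $\bA\bB=\bA$).
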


The next theorem gives more freedom on the mean matrix $\bB_n$, however
stronger assumption on the limit matrix $\bB$ is needed.
The inequality for the matrices are meant elementwise.

\begin{theorem} \label{th:spec-2}
Assume that for the mean matrix of the Bernoulli offspring distribution
\[
\vartheta_n \bB \leq \bB_n \leq \varrho_n \bB
\]
holds, where $\vartheta_n \leq \varrho_n < 1$, $\vartheta_n \to 1$, $\varrho_n \to 1$,
$\sum_{n=1}^\infty ( 1 - \varrho_n) = \infty$, and
$(\varrho_n - \vartheta_n )/ ( 1 - \varrho_n) \to 0$, and
for the immigration
\[
\bmm_n (\bB - \bB_n)^{-1}  \to \blambda.
\]
If either (a) $\bB^n \to \bA$ for some matrix $\bA$ or (b)
$\|\bB\| \leq 1$ and $(1- \varrho_{n+1} ) / ( 1 - \varrho_n) \to 1$ holds  then
\[
\bX_n \stackrel{{\cal D}}{\longrightarrow} \Po(\blambda \bA),
\]
where in case (b) the matrix $\bA$ is given in Proposition \ref{prop:doob}.
\end{theorem}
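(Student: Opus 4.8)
The plan is to reduce Theorem~\ref{th:spec-2} to Theorem~\ref{th:INAR} (the Bernoulli case) by verifying conditions \textnormal{(B1)}--\textnormal{(B5)} together with the immigration condition \textnormal{(M)} under the hypothesis $\vartheta_n \bB \leq \bB_n \leq \varrho_n \bB$. Since the offspring distribution is Bernoulli by assumption, and \textnormal{(M)} is assumed directly, the whole content is in checking the five mean-matrix conditions and identifying the limit matrix $\bA$ as $\lim_n \bB^n$ (case (a)) or as the matrix from Proposition~\ref{prop:doob} (case (b)). Condition \textnormal{(B1)} is immediate: the squeeze $\vartheta_n \bB \leq \bB_n \leq \varrho_n \bB$ with $\vartheta_n, \varrho_n \to 1$ forces $\bB_n \to \bB$ elementwise. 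For \textnormal{(B2)}, one uses that $\bB$ substochastic gives $\|\bB_n\| \leq \varrho_n \|\bB\| \leq \varrho_n < 1$ in the operator norm induced by $\ell_\infty$ (row-sum norm), and invertibility of $\bB - \bB_n$ for large $n$ follows because $\bB - \bB_n$ is close to $(1-\varrho_n)\bB$ up to an error controlled by $\varrho_n - \vartheta_n = o(1-\varrho_n)$, while $\bB$ itself is assumed invertible; more carefully, write $\bB - \bB_n = (1-\varrho_n)\bB + (\varrho_n \bB - \bB_n)$ with $0 \leq \varrho_n\bB - \bB_n \leq (\varrho_n - \vartheta_n)\bB$, so $(\bB-\bB_n) = (1-\varrho_n)\bB(\bI + \bE_n)$ with $\|\bE_n\| = O((\varrho_n-\vartheta_n)/(1-\varrho_n)) \to 0$, hence invertible.

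For \textnormal{(B3)}, the product $\bB_{[j,n]} = \bB_{j+1}\cdots\bB_n$ satisfies the elementwise bound $\bzero \leq \bB_{[j,n]} \leq \varrho_{[j,n]} \bB^{n-j}$ where $\varrho_{[j,n]} = \varrho_{j+1}\cdots\varrho_n \to 0$ as $n \to \infty$ (because $\sum (1-\varrho_n) = \infty$ implies $\prod \varrho_n = 0$), and $\|\bB^{n-j}\|$ is bounded by $1$; so $\|\bB_{[j,n]}\| \to 0$. The heart of the proof is \textnormal{(B4)} and \textnormal{(B5)}. Here I would establish the two-sided comparison
\[
\vartheta_{[j,n]}\,(1-\varrho_j)\,\bB^{n-j+1} \;\le\; (\bB-\bB_j)\bB_{[j,n]} \;\le\; \varrho_{[j,n]}\,(1-\vartheta_j)\,\bB^{n-j+1}
\]
(using $\bB - \bB_j \leq \bB - \vartheta_j \bB = (1-\vartheta_j)\bB$ on the upper side, $\bB - \bB_j \geq (1-\varrho_j)\bB$ on the lower side, together with monotonicity of the product of nonnegative matrices), and then argue that since $(\varrho_n-\vartheta_n)/(1-\varrho_n) \to 0$ both the upper and lower bounding sums have the same limit, which is the limit of $\sum_{j=1}^n (1-\varrho_j)\varrho_{[j,n]}\bB^{n-j+1}$. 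In case (b), Proposition~\ref{prop:doob} (together with the assumed $(1-\varrho_{n+1})/(1-\varrho_n)\to 1$) gives exactly that this sum converges to a matrix $\bA$ with $\bB\bA = \bA$, so in particular $\sum_{j=1}^n(1-\varrho_j)\varrho_{[j,n]}\bB^{n-j+1} = \bB \sum_{j=1}^n (1-\varrho_j)\varrho_{[j,n]}\bB^{n-j} \to \bB\bA = \bA$. In case (a), $\bB^n \to \bA$ forces the same summability limit by a direct Toeplitz-type argument: $\sum_{j=1}^n(1-\varrho_j)\varrho_{[j,n]}\bB^{n-j+1}$, with the coefficients $a_{j,n} = (1-\varrho_j)\varrho_{[j,n]}$ satisfying \eqref{eq:1d-toeplitz}, and with the mass concentrating on small $n-j$ where $\bB^{n-j+1} \approx \bA$, so the sum $\to \bA$ (using $\bA = \bB\bA$, which holds because $\bB^{n+1} \to \bA$ and $\bB^{n+1} = \bB\cdot\bB^n \to \bB\bA$). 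Condition \textnormal{(B5)} follows from the upper comparison and the bound $\sup_n \sum_{j=1}^n \varrho_{[j,n]}(1-\vartheta_j) < \infty$, which in turn follows from $\sum_{j}\varrho_{[j,n]}(1-\varrho_j) \leq 1$ plus the vanishing correction.

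Having verified \textnormal{(B1)}--\textnormal{(B5)}, I would invoke Theorem~\ref{th:INAR} directly: condition \textnormal{(M)} is in the hypotheses, and Theorem~\ref{th:INAR} yields $\bX_n \distr \Po(\blambda\bA)$ with $\bA$ the limit matrix from \textnormal{(B4)}, which the above identifies with $\lim \bB^n$ in case (a) and with Proposition~\ref{prop:doob}'s matrix in case (b). The main obstacle I anticipate is \textnormal{(B4)}: one must be careful that the elementwise squeeze genuinely transfers to convergence of the matrix sums, i.e.\ that the ``error'' sum $\sum_{j=1}^n \big[(\bB-\bB_j) - (1-\varrho_j)\bB\big]\bB_{[j,n]}$ really is $o(1)$ in norm. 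This needs the quantitative estimate $\|(\bB-\bB_j) - (1-\varrho_j)\bB\| \leq (\varrho_j-\vartheta_j)\|\bB\|$ combined with $\sum_j \|\bB_{[j,n]}\| \lesssim \sum_j \varrho_{[j,n]}$, which is not uniformly bounded in general — so one actually has to weight by $(\varrho_j - \vartheta_j)$ and use a Toeplitz argument with the triangular array $\varrho_{[j,n]}(\varrho_j-\vartheta_j)$, whose row sums vanish precisely because $(\varrho_j-\vartheta_j)/(1-\varrho_j) \to 0$ and $\sum_j \varrho_{[j,n]}(1-\varrho_j)$ is bounded. This is the one place where the hypothesis $(\varrho_n-\vartheta_n)/(1-\varrho_n)\to 0$ is essential and must be used with care.
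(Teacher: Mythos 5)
Your proposal is correct and follows essentially the same route as the paper: the two-sided elementwise comparison $\vartheta_{[j,n]}(1-\varrho_j)\bB^{n-j+1}\le\bA_{j,n}\le\varrho_{[j,n]}(1-\vartheta_j)\bB^{n-j+1}$, the observation that both bounding scalar arrays satisfy the Toeplitz conditions because $(\varrho_n-\vartheta_n)/(1-\varrho_n)\to 0$, and the squeeze with case (a) handled by $\bB^n\to\bA$ and case (b) by Proposition~\ref{prop:doob}. The remaining verifications of \textnormal{(B1)}--\textnormal{(B3)}, \textnormal{(B5)} and the appeal to Theorem~\ref{th:INAR} match what the paper leaves implicit (only note that $\bB^{n-j+1}\approx\bA$ for \emph{large} $n-j$, not small, though your Toeplitz argument is unaffected).
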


\section{Proofs}\label{section:proofs}

Before the proofs, we gather some simple inequalities, which we use frequently
without further reference.
If $a_k,b_k\in [-1,1]$, $k=1,\dots,n$, then
\[
\left| \prod_{k=1}^n a_k - \prod_{k=1}^n b_k \right|
\leq \sum_{k=1}^n | a_k - b_k |.
\]
For $x \in (-1,1)$ we have $|\ee^{x} - 1 - x | \leq x^2$.

For a vector-vector function $\bH: \RR^d \to \RR^d$ the symbol $\nabla \bH$ denotes
\[
\begin{bmatrix}
\frac{\partial}{\partial x_1} H_1 & \ldots & \frac{\partial}{\partial x_d} H_1 \\
\vdots & & \vdots \\
\frac{\partial}{\partial x_1} H_d & \ldots & \frac{\partial}{\partial x_d} H_d \\
\end{bmatrix}.
\]
By the multivariate mean-value theorem, and the monotonicity of the derivatives,
for a vector of generating functions $\bG =(\bG_1, \ldots, \bG_d)$,
for $\bx \in [0,1]^d$
\begin{equation} \label{eq:ineq-nabla}
\bbone - \bG(\bx) \leq  ( \bbone - \bx) \nabla \bG (\bbone)^\top.  
\end{equation}

\subsection{Proofs for Section \ref{section:general}}

Since $\| \bB_{[j,n]} \| \to 0$ for any $j$, we may and do assume that
$n_0 =1$ in conditions \textnormal{(B2)}, \textnormal{(I2)} and \textnormal{(I4)}.

\begin{proof*}{Lemma \ref{lemma:mean-toeplitz}}
Condition (\ref{T1}) simply follows from \textnormal{(B1)},
\textnormal{(B2)} and \textnormal{(B3)}. Conditions (\ref{T2}) and (\ref{T3}) are
the same as \textnormal{(B4)}, and \textnormal{(B5)}, respectively.
\end{proof*}

\begin{remark}
It is worth to note that after rearranging the sum in \textnormal{(B4)}
and using \textnormal{(B3)}
we obtain that \textnormal{(B4)} is equivalent to the convergence
\[
\lim_{n \to \infty} \sum_{j=1}^n \bB_{[j,n]} = \widetilde \bB,
\]
where the relation between $\bA$ and $\widetilde \bB$ is given by
$ (\bB- \bI) \widetilde \bB + \bB = \bA$.
\end{remark}

Recall the definitions (\ref{eq:genfc-F}) and (\ref{eq:acc-Poi}).

\begin{lemma}\label{lemma:F-tildeF}
Assume conditions \textnormal{(B1)}--\textnormal{(B5)},
\textnormal{(M)}. Then for any $\bx \in [0,1]^d$ we have
\[
\lim_{n \to \infty} | F_n (\bx ) - \widetilde F_n(\bx) | = 0.
\]
\end{lemma}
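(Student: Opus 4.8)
The plan is to compare the two products
$F_n(\bx) = \prod_{j=1}^n H_j(\overline\bG_{j+1,n}(\bx))$
and $\widetilde F_n(\bx) = \prod_{j=1}^n \exp\{H_j(\overline\bG_{j+1,n}(\bx))-1\}$
term by term. Writing $u_{j,n} := 1 - H_j(\overline\bG_{j+1,n}(\bx))$, both factors lie in $[0,1]$ (the first since $H_j$ is a generating function and $\overline\bG_{j+1,n}(\bx)\in[0,1]^d$; the second since $u_{j,n}\ge 0$), so the elementary product inequality recalled at the start of Section~\ref{section:proofs} gives
\[
| F_n(\bx) - \widetilde F_n(\bx) |
\le \sum_{j=1}^n \bigl| (1-u_{j,n}) - \ee^{-u_{j,n}} \bigr|
\le \sum_{j=1}^n u_{j,n}^2 ,
\]
using $|\ee^{-u}-1+u|\le u^2$ for $u\in[0,1)$. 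So everything reduces to showing $\sum_{j=1}^n u_{j,n}^2 \to 0$.

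The next step is to bound $u_{j,n}$ by something summable. Since $H_j$ is the immigration generating function, $1 - H_j(\by) \le (\bbone - \by)\,\bmm_j^\top$ for $\by\in[0,1]^d$ by the mean-value inequality (\ref{eq:ineq-nabla}) applied to the scalar function $H_j$ (here $\nabla H_j(\bbone)^\top = \bmm_j^\top$, the immigration mean vector). Applied to $\by = \overline\bG_{j+1,n}(\bx)$ this gives
\[
0 \le u_{j,n} \le \bigl(\bbone - \overline\bG_{j+1,n}(\bx)\bigr)\,\bmm_j^\top .
\]
Now I iterate (\ref{eq:ineq-nabla}) along the composition defining $\overline\bG_{j+1,n}$: since $\nabla \bG_k(\bbone)^\top = \bB_k$, one obtains $\bbone - \overline\bG_{j+1,n}(\bx) \le (\bbone - \bx)\,\bB_{j+1}\cdots\bB_n = (\bbone-\bx)\bB_{[j,n]}$ (all inequalities elementwise, and the matrices have nonnegative entries). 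Hence
\[
0 \le u_{j,n} \le (\bbone - \bx)\,\bB_{[j,n]}\,\bmm_j^\top
\le C\,\|\bB_{[j,n]}\| \le C\,\|\bB - \bB_j\|^{-1}\cdot\|(\bB-\bB_j)\bB_{[j,n]}\|
\]
for a constant $C$ depending only on $\bx$ and the norm; in the last step I used that $\bB-\bB_j$ is invertible for $j\ge n_0=1$ (reduction noted before the proof of Lemma~\ref{lemma:mean-toeplitz}). Condition (B1) forces $\|\bB-\bB_j\|\to 0$, equivalently $\|(\bB-\bB_j)^{-1}\|\to\infty$ is false — wait: the useful bound is the plain one $u_{j,n}\le C\|\bB_{[j,n]}\|$ together with a second estimate coming from (M).

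The key trick is to split the square: $u_{j,n}^2 \le u_{j,n}\cdot \max_{1\le j\le n} u_{j,n}$, and bound the two factors by different conditions. For the sum $\sum_{j=1}^n u_{j,n}$, I use the sharper bound $u_{j,n}\le (\bbone-\bx)\bB_{[j,n]}\bmm_j^\top$; since $\bmm_j = \bmm_j(\bB-\bB_j)^{-1}(\bB-\bB_j)$, this is $\le (\bbone - \bx)\bB_{[j,n]}(\bB-\bB_j)^\top \bigl((\bB-\bB_j)^{-1}\bigr)^\top\bmm_j^\top$; condition (M) gives $\bmm_j(\bB-\bB_j)^{-1}\to\blambda$, hence this sequence is bounded, and then $\sum_{j=1}^n u_{j,n} \le C'\sum_{j=1}^n \|(\bB-\bB_j)\bB_{[j,n]}\|$, which is bounded uniformly in $n$ by (B5). (Equivalently, one may invoke Lemmas~\ref{lemma:mean-toeplitz} and~\ref{Toeplitz} directly to see $\sum_j u_{j,n}$ converges.) For the factor $\max_{1\le j\le n} u_{j,n}$, I use $u_{j,n}\le C\|\bB_{[j,n]}\|$ together with condition (T1) of Lemma~\ref{Toeplitz}, which holds here by Lemma~\ref{lemma:mean-toeplitz}: $\max_{1\le j\le n}\|\bB - \bB_j\|\cdot\|\bB_{[j,n]}\| \ge c\max_{1\le j\le n}\|\bB_{[j,n]}\|$ is not quite it — rather, $\max_{1\le j\le n}\|(\bB-\bB_j)\bB_{[j,n]}\|\to 0$, and since $\|(\bB-\bB_j)^{-1}\|$ is the reciprocal order of $\|\bB-\bB_j\|$, one gets $\max_{1\le j\le n} u_{j,n}\to 0$ provided $\|\bB-\bB_j\|$ is bounded below away from $0$... this is the delicate point. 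The cleanest route, which I would actually follow, is: bound $\max_j u_{j,n} \le \max_j (\bbone-\bx)\bB_{[j,n]}\bmm_j^\top$ and note $\bmm_j\to\bzero$ is \emph{not} assumed, but $\bmm_j(\bB-\bB_j)^{-1}$ is bounded, so $u_{j,n}\le C'\|(\bB-\bB_j)\bB_{[j,n]}\|$, and now $\max_{1\le j\le n}\|(\bB-\bB_j)\bB_{[j,n]}\| = \max_{1\le j\le n}\|\bA_{j,n}\|\to 0$ by (T1), which holds by Lemma~\ref{lemma:mean-toeplitz}. This gives both factors under control:
\[
\sum_{j=1}^n u_{j,n}^2 \le \Bigl(\max_{1\le j\le n} u_{j,n}\Bigr)\sum_{j=1}^n u_{j,n}
\le C'\Bigl(\max_{1\le j\le n}\|\bA_{j,n}\|\Bigr)\cdot C'\sup_n\sum_{j=1}^n\|\bA_{j,n}\| \longrightarrow 0,
\]
which completes the argument.

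\textbf{Main obstacle.} The routine parts are the two elementary inequalities and the iteration of (\ref{eq:ineq-nabla}) down the composition $\overline\bG_{j+1,n}$. The genuine subtlety is getting $u_{j,n}$ bounded by a \emph{summable} quantity and simultaneously by a quantity that is \emph{uniformly small in $j$}: this is exactly where one must pass from the crude estimate $u_{j,n}\le C\|\bB_{[j,n]}\|$ to the refined $u_{j,n}\le C'\|\bA_{j,n}\|$ by inserting $(\bB-\bB_j)^{-1}(\bB-\bB_j)$ and using (M). This is also the reason — flagged in the remark after Lemma~\ref{Toeplitz} — that the \emph{strong} form (T1) of the Toeplitz hypothesis (uniform smallness of $\max_j\|\bA_{j,n}\|$), rather than mere pointwise convergence $\|\bA_{j,n}\|\to0$, is indispensable here: it is what kills the $\max_j u_{j,n}$ factor.
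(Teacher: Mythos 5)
Your argument is correct and is essentially the paper's own proof: the same product-difference inequality plus $|\ee^{x}-1-x|\le x^2$, the same iteration of (\ref{eq:ineq-nabla}) to get $u_{j,n}\le(\bbone-\bx)\bB_{[j,n]}^\top\bmm_j^\top$, and the same insertion of $(\bB-\bB_j)^{-1}(\bB-\bB_j)$ so that (M), (\ref{T1}) and (\ref{T3}) bound $\sum_j u_{j,n}^2$ by $\bigl(\max_j\|\bA_{j,n}\|\bigr)\sup_n\sum_j\|\bA_{j,n}\|\to 0$. The only cosmetic remark is that the exploratory detours (e.g.\ the discarded bound involving $\|\bB-\bB_j\|^{-1}$) should be pruned, since the final estimate $u_{j,n}\le C'\|\bA_{j,n}\|$ is the one you actually use.
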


\begin{proof}
Since $\bB_n = \nabla \bG_n (\bbone)$ we have
\[
\nabla \overline \bG_{j+1,n}(\bbone) =
\nabla \bG_{j+1}(\bbone) \nabla \bG_{j+2}(\bbone) \ldots 
\nabla \bG_n(\bbone) = \bB_{[j,n]}.
\]
By the scalar version of (\ref{eq:ineq-nabla}) we have 
\begin{eqnarray*}
| F_n (\bx) - \widetilde F_n( \bx) |
& \leq & \sum_{j=1}^n \left| \ee^{H_j ( \overline \bG_{j+1,n}(\bx)) - 1} -
1 - (H_j (\overline \bG_{j+1,n}(\bx)) - 1 ) \right| \\
& \leq & \sum_{j=1}^n
\left( H_j ( \overline \bG_{j+1,n} (\bx) ) -1 \right)^2 \\
& \leq & \sum_{j=1}^n
\left(  (\bbone - \overline \bG_{[j+1,n]}(\bx)) \bmm_j^\top  \right)^2 \\
& \leq & \sum_{j=1}^n
\left( (\bbone - \bx) \bB_{[j,n]}^\top \bmm_j^\top  \right)^2.
\end{eqnarray*}
Since
\[
\left| (\bbone - \bx) \bB_{[j,n]}^\top \bmm_j^\top  \right|
\leq \| \bbone \| \cdot \| \bA_{j,n} \| \max_{k \geq 1} \| \bmm_k (\bB - \bB_k)^{-1} \|,  
\]
by (\ref{T1}), (\ref{T3}) and \textnormal{(M)} the sum above converges to 0, as stated.
\end{proof}

\begin{proof*}{Theorem \ref{th:INAR}}
Since the generating function of
$\Po(\blambda) = \Po(\lambda_1) \times \ldots \times \Po(\lambda_d)$
has the form
\[
\ee^{\lambda_1(x_1-1)} \ldots \ee^{\lambda_d(x_d-1)}
= \ee^{(\bx-\bbone) \blambda^\top } , \qquad \bx \in [0,1]^d,
\]
by Lemma \ref{lemma:F-tildeF} we only have to show that
\[
\sum_{j=1}^n  \bmm_j \bB_{[j,n]}   \to  \blambda  \bA,
\]
for all $\bx \in [0,1]^d$. This holds according to Lemma \ref{Toeplitz}
and our assumption.
\end{proof*} 

Since $D^{\bj} H_n(\bbone) = m_{n, \bj}$,
the multivariate Taylor expansion gives the following.

\begin{lemma}\label{Taylor} 
If $\EE(\|\bvare_n \|^k) < \infty$ for some $k \in \NN$ then
for all $\bx \in [0,1]^d$
\[
\begin{split}
H_n(\bx) & = \sum_{\bell \in\ZZ_+^d,\,|\bell| < k}
\frac{m_{n,\bell}}{\bell!} (\bx-\bbone)^{\bell} + R_{n,k}(\bx) \\
& = 1+ \sum_{i=1}^{k-1}\frac{1}{i!}
\sum_{\ell_1, \ldots, \ell_i=1}^d
\frac{\partial^i H_n(\bbone)}
{\partial x_{\ell_1} \ldots \partial x_{\ell_i}}
(x_{\ell_1} - 1) \ldots (x_{\ell_i} - 1) + R_{n,k} (\bx),
\end{split}
\]
where  $\bell ! := \ell_1! \ldots \ell_d!$ for
$\bell = (\ell_1, \ldots, \ell_d) \in\ZZ_+^d$, and
\[
|R_{n,k}(\bx)| \leq \sum_{\bell \in\ZZ_+^d,\,|\bell| = k} 
\frac{m_{n,\bell}}{\bell!} (\bbone - \bx )^{\bell}
\leq d^k \| \bbone  - \bx \|^k \max_{|\bell|=k} D^{\bell} H_n(\bbone).
\]
\end{lemma}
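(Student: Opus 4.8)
The plan is to reduce the multivariate expansion to the ordinary one‑dimensional Taylor theorem along the segment joining $\bbone$ to $\bx$, and to control the remainder using the fact that a probability generating function, together with all of its partial derivatives, has non‑negative power–series coefficients and is therefore coordinatewise non‑decreasing on $[0,1]^d$.

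First I would record the elementary facts about $H_n$. Since $H_n(\bx) = \EE\,\bx^{\bvare_n} = \sum_{\bk \in \ZZ_+^d} \PP(\bvare_n = \bk)\,\bx^{\bk}$ has non‑negative coefficients and $H_n(\bbone) = 1$, term‑by‑term differentiation is legitimate on $[0,1)^d$, and by monotone convergence the (left) partial derivative $D^{\bj} H_n(\bbone)$ equals the factorial moment $m_{n,\bj}$; the hypothesis $\EE\|\bvare_n\|^k < \infty$ makes $m_{n,\bj}$ finite for every $\bj$ with $|\bj| \leq k$. I would also note the monotonicity that drives the remainder bound: for every $\bj$, $D^{\bj} H_n$ again has non‑negative coefficients, so $0 \leq D^{\bj} H_n(\by) \leq D^{\bj} H_n(\bbone) = m_{n,\bj}$ for every $\by \in [0,1]^d$ with $\by \leq \bbone$, in particular for every $\by$ on the segment $[\bx,\bbone]$.

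Next, fix $\bx \in [0,1]^d$ and set $\phi(t) = H_n\big(\bbone + t(\bx - \bbone)\big)$ for $t \in [0,1]$. Using that $H_n$ is real‑analytic in the interior and has finite one‑sided derivatives up to order $k$ at $\bbone$, $\phi$ is $C^{k-1}$ on $[0,1]$, $k$ times differentiable on $(0,1)$, and $\phi^{(k)}$ is bounded (the bound coming from the monotonicity step). The multivariate chain rule, i.e.\ the multinomial theorem, gives for $0 \leq i \leq k-1$
\[
\frac{\phi^{(i)}(0)}{i!} = \sum_{|\bj| = i} \frac{D^{\bj} H_n(\bbone)}{\bj!}\,(\bx - \bbone)^{\bj}
= \sum_{|\bj| = i} \frac{m_{n,\bj}}{\bj!}\,(\bx - \bbone)^{\bj},
\]
and $\phi^{(k)}(t) = \sum_{|\bj| = k} \frac{k!}{\bj!}\, D^{\bj} H_n\big(\bbone + t(\bx-\bbone)\big)\,(\bx-\bbone)^{\bj}$. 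Applying Taylor's formula with integral remainder to $\phi$ on $[0,1]$ and rewriting the inner sums in the coordinate notation of the statement produces exactly the claimed expansion, with $R_{n,k}(\bx) = \frac{1}{(k-1)!}\int_0^1 (1-t)^{k-1}\phi^{(k)}(t)\,\dd t$.

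Finally, on the segment we have $\bbone + t(\bx - \bbone) \leq \bbone$, so the monotonicity step gives $0 \leq D^{\bj} H_n(\bbone + t(\bx-\bbone)) \leq m_{n,\bj}$, while $|(\bx - \bbone)^{\bj}| = (\bbone - \bx)^{\bj}$ for $|\bj| = k$; together with $\int_0^1 (1-t)^{k-1}\,\dd t = 1/k$ this yields $|R_{n,k}(\bx)| \leq \sum_{|\bj| = k} \frac{m_{n,\bj}}{\bj!}(\bbone - \bx)^{\bj}$, the first bound. The cruder second bound then follows by estimating the number of multi‑indices of total degree $k$ by $d^k$, using $\bj! \geq 1$, $m_{n,\bj} \leq \max_{|\bell| = k} D^{\bell} H_n(\bbone)$, and $(\bbone - \bx)^{\bj} \leq \|\bbone - \bx\|^k$. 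The one genuinely delicate point is the justification that $\phi$ is $C^{k-1}$ up to the endpoint $t = 0$ (where the derivatives of $H_n$ are only one‑sided) and that $\phi^{(k)}$ is bounded there; this is precisely where finiteness of the $k$‑th factorial moments, equivalently $\EE\|\bvare_n\|^k < \infty$, together with an Abel‑type continuity argument for power series with non‑negative coefficients, is used. Everything else is bookkeeping.
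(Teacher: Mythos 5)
Your proof is correct and takes essentially the route the paper intends: the paper gives no argument beyond the remark that $D^{\bj}H_n(\bbone)=m_{n,\bj}$ and an appeal to ``the multivariate Taylor expansion,'' and your write-up---restricting $H_n$ to the segment from $\bbone$ to $\bx$, applying the one-dimensional Taylor formula with integral remainder, and bounding the remainder via the coordinatewise monotonicity of the derivatives of a probability generating function---is exactly the standard justification of that appeal. The endpoint-regularity point you flag is handled as you indicate (monotone convergence of $D^{\bj}H_n(\by)$ as $\by\uparrow\bbone$ under finiteness of the $k$-th factorial moments), so nothing essential is missing.
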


\begin{proof*}{Theorem \ref{th:general-immigration}}
Since the offsprings are Bernoulli distributed
\[
\overline \bG_{j+1,n}(\bx) = \bbone +  (\bx - \bbone) \bB_{[j,n]}^\top,
\]
therefore by Lemma \ref{lemma:F-tildeF} it is enough to show that
the convergence
\[
\sum_{j=1}^n
\left[ H_j ( \overline \bG_{j+1,n}(\bx) ) - 1 \right]
\to 
\sum_{i=1}^{k-1} 
\sum_{\ell_{i+1}, \ldots, \ell_{2i} = 1}^d
\Lambda_{i; \ell_{i+1}, \ldots, \ell_{2i}}
(x_{\ell_{i+1}} - 1) \ldots (x_{\ell_{2i}} - 1)
\]
holds. Using Lemma \ref{Taylor}  we may write
\[
\begin{split}
\sum_{j=1}^n \left[ H_j (\overline \bG_{j+1,n}(\bx)) - 1 \right]
& = \sum_{j=1}^n \sum_{i=1}^{k-1}\frac{1}{i!}
\sum_{\ell_1, \ldots, \ell_i=1}^d
\frac{\partial^i H_j(\bbone)}
{\partial x_{\ell_1} \ldots \partial x_{\ell_i}}
\big( ( \bx - \bbone ) \bB_{[j,n]}^\top \big)_{\ell_1}
\ldots \big( (\bx - \bbone ) \bB_{[j,n]}^\top  \big)_{\ell_i} \\
& \phantom{=}  + \sum_{j=1}^n R_{j,k}\big( \bbone + (\bx - \bbone ) \bB_{[j,n]}^\top \big).
\end{split}
\]
Since, for $m \in \{ 1, \ldots, i\}$
\[
\left( (\bx - \bbone ) \bB_{[j,n]}^\top  \right)_{\ell_m} =
\sum_{\ell_{i+m} = 1}^d \left( \bB_{[j,n]} \right)_{ \ell_m, \ell_{i+m}} (x_{\ell_{i+m} } - 1),
\]
by (\ref{eq:imm-assumption-II}) the first term converges for any $i \in \{1, \ldots, k-1 \}$,
\[
\begin{split}
& \lim_{n \to \infty}
\frac{1}{i!}
\sum_{j=1}^n
\sum_{\ell_1, \ldots, \ell_i=1}^d
\frac{\partial^i H_j(\bbone)}
{\partial x_{\ell_1} \ldots \partial x_{\ell_i}}
( ( \bx - \bbone ) \bB_{[j,n]}^\top )_{\ell_1}
\ldots (  (\bx - \bbone ) \bB_{[j,n]}^\top  )_{\ell_i} \\
& = 
\sum_{\ell_{i+1}, \ldots, \ell_{2i} =1}^d
\Lambda_{i; \ell_{i+1}, \ldots, \ell_{2i}}
(x_{\ell_{i+1}} -1) \ldots (x_{\ell_{2i}} - 1).
\end{split}
\]
Using Lemma \ref{Taylor} for the second term we have
\[
\begin{split}
\sum_{j=1}^n |R_{j,k} (\bbone +  (\bx - \bbone ) \bB_{[j,n]}^\top ) |
& \leq \sum_{j=1}^n d^k \max_{|\bell | = k} D^{\bell} H_j(\bbone) \|\bB_{[j,n]}\|^k \\
& \leq d^k \sum_{j=1}^n  \max_{|\bell | = k} D^{\bell} H_j(\bbone)
\| (\bB - \bB_j)^{-1} \| \cdot   \|\bA_{j,n}\|,
\end{split}
\]
which goes to 0, due to (\ref{eq:imm-assumption-I}).
\end{proof*}

\subsection{Proofs for Section \ref{section:special}}

We start with the case when the limit matrix is the identity.

\begin{lemma}\label{L1norm}
For any $d \geq 1$
there exists positive constant  $C_d$ such that
\[
\sum_{j=1}^n \| \bA_j \| \leq C_d \left\| \sum_{j=1}^n \bA_j \right\|
\]
for all $n \in \NN$ and for all matrices $\bA_j \in \RR_+^{d \times d}$,
$j \in \{1,\dots,n\}$.
\end{lemma}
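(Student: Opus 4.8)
The statement is that for matrices with non-negative entries, the sum of operator norms is controlled by the operator norm of the sum, up to a dimension-dependent constant. The plan is to exploit the equivalence of all norms on the finite-dimensional space $\RR^{d\times d}$, so that it suffices to prove the inequality for one conveniently chosen norm, and then transfer it to the arbitrary operator norm $\|\cdot\|$ appearing in the statement at the cost of an extra constant factor. Concretely, I would first fix the $\ell_1\to\ell_1$ operator norm on $\RR^{d\times d}$, i.e. the maximum absolute column sum; call it $\|\cdot\|_1$. The key observation is that for a matrix $\bA$ with non-negative entries, $\|\bA\|_1 \le \sum_{i,j} (\bA)_{i,j} =: s(\bA)$, and conversely $\|\bA\|_1 \ge \frac{1}{d} s(\bA)$, since the largest column sum is at least the average column sum $s(\bA)/d$. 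The quantity $s(\bA)$ is additive: $s(\sum_j \bA_j) = \sum_j s(\bA_j)$ exactly because all entries are non-negative (no cancellation). Therefore
\[
\sum_{j=1}^n \|\bA_j\|_1 \le \sum_{j=1}^n s(\bA_j) = s\Big(\sum_{j=1}^n \bA_j\Big) \le d \,\Big\| \sum_{j=1}^n \bA_j \Big\|_1 .
\]
This proves the lemma for the specific norm $\|\cdot\|_1$ with constant $d$.

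To pass from $\|\cdot\|_1$ to the arbitrary operator norm $\|\cdot\|$ used in the paper, I would invoke norm equivalence on $\RR^{d\times d}$: there are constants $0 < c \le C$ (depending only on $d$ and the chosen vector norm, hence absorbable into a dimension constant, or simply treated as part of $C_d$) such that $c\|\bM\|_1 \le \|\bM\| \le C\|\bM\|_1$ for all $\bM \in \RR^{d\times d}$. Then
\[
\sum_{j=1}^n \|\bA_j\| \le C \sum_{j=1}^n \|\bA_j\|_1 \le C d \,\Big\| \sum_{j=1}^n \bA_j \Big\|_1 \le \frac{Cd}{c}\,\Big\| \sum_{j=1}^n \bA_j \Big\| ,
\]
so the claim holds with $C_d := Cd/c$, which depends only on $d$ (given the fixed ambient vector norm). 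This completes the proof.

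The only genuinely substantive point — and the one I would state carefully — is the non-negativity-driven additivity $s(\sum_j \bA_j) = \sum_j s(\bA_j)$ together with the two-sided comparison $\frac1d s(\bA) \le \|\bA\|_1 \le s(\bA)$ valid for $\bA \in \RR_+^{d\times d}$; everything else is the routine finite-dimensional norm-equivalence machinery. I do not expect a serious obstacle here: the main thing to be mindful of is that the conclusion is meant for a \emph{fixed but arbitrary} operator norm (as the paper emphasizes after Proposition \ref{Xmean}), so the constant $C_d$ is allowed to depend on that norm through the equivalence constants, and one should make clear that non-negativity of the entries is exactly what kills the potential cancellation that would otherwise break the lower bound on $s(\sum_j \bA_j)$.
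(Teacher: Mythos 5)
Your proposal is correct and follows essentially the same route as the paper: the paper directly compares the arbitrary operator norm with the entrywise sum $\sum_{i,k}|a_{i,k}|$ via finite-dimensional norm equivalence and then uses that this entrywise sum is additive on $\RR_+^{d\times d}$ because non-negativity prevents cancellation. Your intermediate detour through the $\ell_1\to\ell_1$ column-sum norm is harmless but unnecessary; otherwise the key idea and the resulting constant are the same.
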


\begin{proof}
The norms of a finite dimensional vector space are equivalent, hence there are
positive constants $c_d , \tc_d$ such that
\[
c_d \sum_{i=1}^d \sum_{k=1}^d |a_{i,k}|
\leq \| \bA \|
\leq \tc_d \sum_{i=1}^d \sum_{k=1}^d |a_{i,k}|
\]
for all matrices $\bA = (a_{i,j})_{i,j\in\{1,\dots,d\}} \in \RR^{d \times d}$.
Put $(\bA_j)_{i,k}=a_{j;i,k}$. Consequently,
\begin{align*}
\sum_{j=1}^n \| \bA_{j} \|
&\leq \tc_d \sum_{j=1}^n \sum_{i=1}^d \sum_{k=1}^d |a_{j;i,k}|
= \tc_d \sum_{j=1}^n \sum_{i=1}^d \sum_{k=1}^d a_{j;i,k} \\
&= \tc_d \sum_{i=1}^d \sum_{k=1}^d \sum_{j=1}^n a_{j;i,k}
= \tc_d \sum_{i=1}^d \sum_{k=1}^d \left| \sum_{j=1}^n a_{j;i,k} \right|
\leq \frac{\tc_d}{c_d} \left\| \sum_{j=1}^n \bA_j \right\|.
\end{align*}
\end{proof}

\begin{proof*}{Proposition \ref{Xmean}}
Condition (\ref{T1}) follows from \textnormal{(I1)}, \textnormal{(I2)}
and \textnormal{(I3)}, as in the general case.
As we already mentioned (\ref{T2}) is automatic, since
\[
\sum_{j = 1}^n \bA_{j,n}
= \sum_{j = 1}^n \left( \bB_{[j,n]} - \bB_{[j-1, n]} \right) 
= \bI - \bB_{[0,n]}
\to \bI \qquad \text{as $n\to\infty$}
\]
by condition \textnormal{(I3)}.

If
\[
C := \sup_{j \geq 1} \frac{\| \bI - \bB_j \|}{1 - \|\bB_j\|} < \infty
\]
then for all $n \geq j\geq 1$,
\[ 
\| \bA_{j,n} \|
\leq \| \bI - \bB_j \| \cdot \| \bB_{[j,n]} \| 
\leq C  (1 - \|\bB_j\|) \, \| \bB_{[j,n]} \|.
\]
Since $\bB_{[n,n]} = \bI$ we have $\| \bB_{[n,n]} \| = 1$, and
\[
\| \bB_{[j,n]} \| \leq \prod_{k=j+1}^n \| \bB_k \|
\]
for all $n > j$, thus
\begin{align*}
\sum_{j=1}^n \| \bA_{j,n} \|
& \leq C ( 1 - \| \bB_n \| ) + 
C \sum_{j=1}^{n-1} (1- \| \bB_j \|) \prod_{k=j+1}^n \| \bB_k \| \\
&= C \bigg( 1 - \prod_{k=1}^n \| \bB_k \| \bigg)
\leq C,
\end{align*}
and we deduce \eqref{T3}.

Otherwise, if $\bA_{j,n} \in \RR_+^{d \times d}$ for all $n \geq 1$ and
all $j\in \{1, \ldots, n\}$ then by Lemma \ref{L1norm}
\[
\sum\limits_{j=1}^n \| \bA_{j,n} \|
\leq C_d \Biggl\| \sum_{j=1}^n \bA_{j,n} \Biggr\| ,
\]
and \eqref{T2} implies \eqref{T3}.
\end{proof*}

\begin{proof*}{Proposition \ref{Xmean_diag}}
We have to check only \eqref{T3}, since \eqref{T1} and \eqref{T2} follow from
conditions \textnormal{(I1)}--\textnormal{(I3)}.
In this case
\[
\bB_{[j,n]} = \bB_{j+1} \ldots \bB_n  = \bU
\diag \left(\varrho_{[j,n],1}, \ldots, \varrho_{[j,n],d} \right) \bU^\top,
\]
where $\varrho_{[j,n],i} = \varrho_{j+1,i} \ldots \varrho_{n, i}$.
Using again that the norm of a normal element in a $C^*$-algebra equal
to its spectral radius, we have
\begin{align*}
\|\bA_{j,n}\|
&=\|\bU ( \diag( \varrho_{[j,n],1}, \ldots, \varrho_{[j,n],d} )
- \diag( \varrho_{[j-1,n],1}, \ldots, \varrho_{[j-1,n],d} ) ) \bU^\top\| \\
&=\|\diag( (1-\varrho_{j,1}) \varrho_{[j,n],1}, \ldots,
(1-\varrho_{j,d}) \varrho_{[j,n],d} )\|\\
&=\max_{1\leq i \leq d} (1-\varrho_{j,i}) \varrho_{[j,n],i}.
\end{align*}
Thus 
(\ref{T3}) follows from
$\sum_{j=1}^n (1-\varrho_{j,i}) \varrho_{[j,n],i} = 1-\varrho_{1,i}\ldots\varrho_{n,i} \leq 1$,
$i \in \{1,\dots,d\}$.
\end{proof*}

Next we turn to the proofs when $\bB_n = \varrho_n \bB$.
A slight modification of the proof of Theorem 5.2.1 in Doob \cite{doob} gives

\begin{lemma} \label{lemma-Doob}
Assume that $( a_{j,n} )$ satisfies (\ref{eq:1d-toeplitz}),
$\sum_{j=1}^{n-1} | a_{j+1,n} - a_{j,n}| \to 0$, and let $\bB$ be a matrix
such that $\| \bB\| \leq 1$.
Then there exists a matrix $\bA$, such that
\[
\lim_{n \to \infty} \sum_{j=1}^n a_{j,n} \bB^j = \bA.
\]
Moreover $\bA \bB= \bB \bA = \bA = \bA^2$.
\end{lemma}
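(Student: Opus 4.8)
The plan is to run the classical ergodic-type decomposition behind Doob's Theorem~5.2.1: split $\RR^d$ into the fixed space of $\bB$ and a complementary $\bB$-invariant subspace, and analyse $T_n:=\sum_{j=1}^n a_{j,n}\bB^j$ separately on each piece. First note that $\|\bB^k\|\le\|\bB\|^k\le 1$ for every $k$, so $\|T_n\|\le\sum_{j=1}^n|a_{j,n}|\le\sup_m\sum_{j=1}^m|a_{j,m}|<\infty$ and $(T_n)$ is bounded; also the hypotheses readily give $\max_{1\le j\le n}|a_{j,n}|\to 0$ (this is immediate when the array is nonnegative, as it is in the application, and in general follows by combining $\max_j a_{j,n}\to 0$, $\sum_j a_{j,n}\to 1$ and $\sum_j|a_{j+1,n}-a_{j,n}|\to 0$).

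The key estimate is the telescoping identity
\[
T_n(\bI-\bB)=\sum_{j=1}^n a_{j,n}\bigl(\bB^j-\bB^{j+1}\bigr)
= a_{1,n}\bB - a_{n,n}\bB^{n+1} + \sum_{j=2}^n (a_{j,n}-a_{j-1,n})\,\bB^j .
\]
Using $\|\bB^j\|\le 1$, the bounds $|a_{1,n}|,|a_{n,n}|\le\max_j|a_{j,n}|\to 0$, and the bounded-variation hypothesis $\sum_{j=1}^{n-1}|a_{j+1,n}-a_{j,n}|\to 0$, the right-hand side tends to $\bzero$; hence $T_n(\bI-\bB)\to\bzero$ (and, since $T_n$ is a polynomial in $\bB$, this coincides with $(\bI-\bB)T_n$).

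Since $\bB$ is power-bounded, the eigenvalue $1$ — if it occurs — is semisimple, so $\RR^d=W_0\oplus V$ with $W_0:=\ker(\bB-\bI)$ and $V:=\operatorname{Range}(\bB-\bI)$, both $\bB$-invariant, and $(\bI-\bB)|_V$ invertible. On $W_0$ we have $\bB^j v=v$, hence $T_n v=\bigl(\sum_{j=1}^n a_{j,n}\bigr)v\to v$ by $\sum_j a_{j,n}\to 1$. On $V$, write $w=(\bI-\bB)v$ with $v\in V$; then $T_n w=\bigl(T_n(\bI-\bB)\bigr)v\to\bzero$ by the key estimate. Therefore $T_n\to\bA$, where $\bA$ is the projection onto $W_0$ along $V$ (so $\bA=\bzero$ when $1$ is not an eigenvalue of $\bB$). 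Being a projection, $\bA^2=\bA$; and since $V$ is $\bB$-invariant with $\bA|_V=\bzero$ while $\bB|_{W_0}=\bI$, one checks $\bB\bA=\bA\bB=\bA$, as claimed; this also reproves the relations $T_n(\bI-\bB)\to\bzero$ at the level of the limit.

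The main obstacle is the analysis on $V$: there $\bB^j$ need not converge and may oscillate because of eigenvalues of $\bB$ of modulus $1$ other than $1$ itself, and it is precisely the extra hypothesis $\sum_j|a_{j+1,n}-a_{j,n}|\to 0$ that damps these oscillations inside the telescoped sum — the Example immediately following the lemma shows that the conclusion genuinely fails without it. A minor point to dispatch with care is that $\|\bB^k\|\le 1$ for all $k$ rules out a nontrivial Jordan block at the eigenvalue $1$, which is what makes the direct-sum decomposition $\RR^d=W_0\oplus V$ legitimate (cf.\ Doob \cite{doob}).
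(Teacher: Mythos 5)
Your proof is correct, but it takes a genuinely different route from the paper's. Both arguments hinge on the same telescoping estimate showing $T_n(\bI-\bB)\to\bzero$, but after that they diverge. The paper follows Doob more literally: it extracts a convergent subsequence from the bounded sequence $(T_n)$, uses the telescoping estimate to show any subsequential limit $\bA$ satisfies $\bA\bB=\bB\bA=\bA$, and then plays two subsequential limits $\bA,\bC$ against each other via $T_{n}\bA=\bigl(\sum_j a_{j,n}\bigr)\bA\to\bA$ to get $\bC\bA=\bA\bC=\bA$ and, by symmetry, uniqueness and idempotency. You instead identify the limit explicitly as the spectral projection onto $\ker(\bB-\bI)$ along $\operatorname{Range}(\bB-\bI)$, after checking that power-boundedness makes the eigenvalue $1$ semisimple so that the direct-sum decomposition is legitimate. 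Your version buys a concrete description of $\bA$ (in particular $\bA=\bzero$ when $1$ is not an eigenvalue, and $\bA$ stochastic when $\bB$ is), which the paper only obtains implicitly through the algebraic relations; the paper's compactness argument is softer and avoids any Jordan-form considerations. Your justification that $\max_j|a_{j,n}|\to 0$ in the signed case (all $a_{j,n}$ lie within $o(1)$ of $a_{1,n}$ by the variation hypothesis) is a point the paper glosses over when it asserts $a_{1,n}\to 0$ and $a_{n,n}\to 0$. One cosmetic caveat: the paper's convention is row vectors acting on the left ($\bx\bA$), so your decomposition should formally be applied to the transpose (or to left null/range spaces); this does not affect the substance.
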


\begin{proof}
Since for every $j$ the matrix $\|\bB^j\| \leq 1$, the sequence $\sum_{j=1}^n a_{j,n} \bB^j$
is bounded, so there is a subsequence $n_k$ and a limit $\bA$ such that
$$
\sum_{j=1}^{n_k} a_{j,n_k} \bB^j \to \bA \quad \textrm{as } k \to \infty.
$$
Multiplying by $\bB$ we obtain
$$
\sum_{j=1}^{n_k} a_{j,n_k} \bB^{j+1} \to \bB \bA  = \bA \bB\quad \textrm{as } k \to \infty.
$$
Writing $n$ instead of $n_k$, the difference between the two limits is
$$
\sum_{j=1}^{n} a_{j,n} \bB^{j+1} - \sum_{j=1}^{n} a_{j,n} \bB^j
=  a_{n,n}  \bB^{n+1}   -  a_{1,n} \bB +
\sum_{j=2}^n (a_{j-1,n} - a_{j,n} ) \bB^j.
$$
Using that $\bB^j$ is bounded, $a_{1,n} \to 0, a_{n,n} \to 0$ and that
$\sum_{j=1}^{n-1} | a_{j+1,n} - a_{j,n} | \to 0$ we obtain that $\bA \bB= \bB \bA = \bA$.
And so
$$
\left( \sum_{j=1}^n a_{j,n} \bB^j \right) \bA =
\sum_{j=1}^n a_{j,n} \bA
$$
gives that for any other subsequential limit $\bC$, $\bA \bC = \bC \bA = \bA$. Since the roles
are interchangeable, we obtain that there is only one limit matrix, which is idempotent.
\end{proof}

Using the lemma above it is easy to prove Proposition \ref{prop:doob}.

\begin{proof*}{Proposition \ref{prop:doob}}
We only have to check that for $a_{j,n} = \varrho_{[j,n]} ( 1 - \varrho_j)$ the condition
$\sum_{j=1}^{n-1} | a_{j+1,n} - a_{j,n} | \to 0$ satisfied. We have
\[
a_{j+1,n} - a_{j,n} = \varrho_{[j+1,n]} \left[ ( 1 - \varrho_{j+1}) - \varrho_{j+1} ( 1 - \varrho_j) \right],
\]
thus
\[
\sum_{j=1}^{n-1} | a_{j+1,n} - a_{j,n} | =
\sum_{j=1}^{n-1} \frac{ |( 1 - \varrho_{j+1}) - \varrho_{j+1} ( 1 - \varrho_j) |}{ 1 - \varrho_{j+1}}
\varrho_{[j+1,n]} ( 1 - \varrho_{j+1}),
\]
which goes to 0, since
\[
\frac{ |( 1 - \varrho_{n+1}) - \varrho_{n+1} ( 1 - \varrho_n) |}{ 1 - \varrho_{n+1}}
= \left| 1  -   \varrho_{n+1} \frac{ 1 - \varrho_n}{1 - \varrho_{n+1}} \right| \to 0.
\]
\end{proof*}

\begin{proof*}{Theorem \ref{th:spec-2}}
To prove the theorem we only have to show that condition (\ref{T2}) holds
for $\bA_{j,n} =  (\bB - \bB_j) \bB_{[j,n]}$.

By the monotonicity assumptions
\[
\bB_{[j,n]} =  \bB_{j+1} \ldots \bB_n  \leq
\varrho_{j+1} \bB \ldots \varrho_{n} \bB =
\varrho_{[j,n]} \bB^{n-j}
\]
and similarly
\[
\bB_{[j,n]} \geq \vartheta_{[j,n]} \bB^{n-j}.
\]
Keeping in mind that each element
of $\bB - \bB_j$ is non-negative, we have
\[
(1 - \varrho_j) \vartheta_{[j,n]} \bB^{n-j+1} \leq  \bA_{j,n}
\leq  (1 - \vartheta_j) \varrho_{[j,n]} \bB^{n-j+1}.
\]
After summation
\begin{equation} \label{eq:theta-est}
\sum_{j=1}^n \bB^{n-j+1} \vartheta_{[j,n]} ( 1 - \varrho_j) \leq
\sum_{j=1}^n \bA_{j,n} \leq \sum_{j=1}^n \bB^{n-j+1} \varrho_{[j,n]} ( 1 - \vartheta_j).
\end{equation}

First we show that the sequences  $( \vartheta_{[j,n]} ( 1 - \varrho_j) )$ and
$( \varrho_{[j,n]} ( 1 - \vartheta_j) )$ satisfy conditions (\ref{eq:1d-toeplitz}).
According to the assumptions
\begin{equation} \label{eq:theta-rho}
\sum_{j=1}^n \vartheta_{[j,n]} ( 1 - \vartheta_j) \to 1 \quad \textrm{and } \
\sum_{j=1}^n \varrho_{[j,n]} ( 1 - \varrho_j) \to 1.
\end{equation}
Since $\varrho_n \geq \vartheta_n$ we have
\begin{equation} \label{eq:theta}
0 \leq \sum_{j=1}^n \vartheta_{[j,n]} ( \varrho_j - \vartheta_j) =
\sum_{j=1}^n \frac{ \varrho_j - \vartheta_j}{1 - \vartheta_j} (1 - \vartheta_j) \vartheta_{[j,n]}
\to 0,
\end{equation}
as 
$$
\frac{ \varrho_j - \vartheta_j}{1 - \vartheta_j} \leq \frac{ \varrho_j - \vartheta_j}{1 - \varrho_j}
\to 0.
$$
Similarly
\begin{equation} \label{eq:rho}
0 \leq \sum_{j=1}^n \varrho_{[j,n]} ( \varrho_j - \vartheta_j) =
\sum_{j=1}^n \frac{ \varrho_j - \vartheta_j}{1 - \varrho_j} (1 - \varrho_j) \varrho_{[j,n]}
\to 0.
\end{equation}
Noting that $\vartheta_{[j,n]}(1- \varrho_j) = \vartheta_{[j,n]} 
[ ( 1- \vartheta_j) - (\varrho_j - \vartheta_j)]$ and
$\varrho_{[j,n]} (1 - \vartheta_j) = \varrho_{[j,n]} 
[ (1 - \varrho_j) + (\varrho_j -  \vartheta_j)]$, (\ref{eq:theta-rho}) combined with
(\ref{eq:theta}) and with (\ref{eq:rho}) shows that conditions (\ref{eq:1d-toeplitz})
indeed hold.

When the convergence $\bB^n \to \bA$ holds, both the upper and the lower estimation
in (\ref{eq:theta-est}) tends to $\bA$, and the statement follows.

In case (b) the extra condition assures
the convergence of the bounds in (\ref{eq:theta-est}) by Lemma \ref{lemma-Doob},
and the equality of the limits readily follows. 
\end{proof*}

\begin{proof*}{Theorem \ref{th:general-offspring}}
By Lemma \ref{lemma:F-tildeF} we have to check that
\[
\sum_{j=1}^n ( \overline \bG_{j+1,n}( \bx ) - \bbone ) {\bmm}_j^\top  \to 
(\bx - \bbone) \blambda^\top.
\]
By (\ref{eq:ineq-nabla}) we have
\begin{equation} \label{eq:G-ineq}
\bbone - \overline \bG_{j+1,n}(\bx) \leq
( \bbone - \bx) \nabla \overline \bG_{j+1,n}(\bbone)^\top 
= ( \bbone - \bx) \bB_{[j,n]}^\top,
\end{equation}
therefore
\[
\sum_{j=1}^n ( \overline \bG_{j+1,n}(\bx) - \bbone ) \bmm_j^\top \geq
\sum_{j=1}^n (\bx - \bbone) \bB_{[j,n]}^\top \bmm_j^\top
\to (\bx - \bbone) \blambda^\top,
\]
where the last convergence holds under the assumptions of the theorem.

According to (\ref{eq:G-ineq})
$ \overline \bG_{j+1,n}(\bx) \in [ \bbone - \bbone \bB_{[j,n]}^\top, \bbone]$, for
all $\bx \in [0,1]^d$. Again by the mean value theorem and by the monotonicity
of the derivatives
\[
\bbone - \bG_j (\by) \geq 
(\bbone - \by) \, \nabla \bG_j( \bbone - \bbone \bB_{[j,n]}^\top )^\top 
=: (\bbone - \by) \, \bTheta_{j,n}^\top,
\]
for $\by \in  [ \bbone -  \bbone \bB_{[j,n]}^\top, \bbone]$, in particular
\[
\bbone - \bG_j (  \overline \bG_{j+1,n}(\bx) ) \geq
(\bbone -  \overline \bG_{j+1,n}(\bx) ) \, \bTheta_{j,n}^\top,
\]
and so induction gives
\[
\bbone - \overline \bG_{j+1,n}(\bx) \geq
(\bbone - \bx)  \bTheta_{n,n}^\top  \bTheta_{n-1,n}^\top \ldots
\bTheta_{j+1,n}^\top
=: (\bbone - \bx) \bTheta_{[j,n]}^\top ,
\]
so
\[
\sum_{j=1}^n ( \overline \bG_{j+1,n}( \bx) - \bbone )  \bmm_j^\top
\leq \sum_{j=1}^n  (\bx - \bbone) \bTheta_{[j,n]}^\top \bmm_j^\top.
\]
We have to check under what conditions
\[
\sum_{j=1}^n  \bmm_j \bTheta_{[j,n]} \to \blambda.
\]
Clearly $\bTheta_{j,n} \uparrow \bB_j$ as $ n \to \infty$. Introduce
$\bC_{j,n} =  (\bI - \bB_j) \bTheta_{[j,n]}$. Since by definition the elements
of $\bC_{j,n}$ are less then or equal to the elements of $\bA_{j,n}$, so
the only assumption we have to check in order to guarantee the convergence
above is
\[
\sum_{j=1}^n \bC_{j,n} \to \bI.
\]
We have
\[
\begin{split}
\sum_{j=1}^n \bC_{j,n}
& =  ( \bI - \bB_1)  \bTheta_{[1,n]} + ( \bI - \bB_2) \bTheta_{[2,n]}  + \ldots +
( \bI - \bB_{n-1}) \bTheta_{[n-1,n]}  + \bI - \bB_n \\
& = \bI -
\Big[ (\bB_n - \bTheta_{n,n}) + (\bB_{n-1} - \bTheta_{n-1,n}) \bTheta_{[n-1,n]} 
+ \ldots  +  (\bB_2 - \bTheta_{2,n} ) \bTheta_{[2,n]}
+ \bB_1 \bTheta_{[1,n]}  \Big].
\end{split}
\]
We show that the sum in the brackets (note that every term is nonnegative)
converge to the 0 matrix. Let us estimate the $(i,k)$-th element of
$\bB_j - \bTheta_{j,n}$. The mean value theorem and the monotonicity of the
derivatives imply
\begin{eqnarray*}
\left( \bB_j - \bTheta_{j,n} \right)_{i,k}
& = & \frac{\partial}{\partial x_k} G_{j,i}( \bbone) -
\frac{\partial}{\partial x_k} G_{j,i}( \bbone -  \bbone \bB_{[j,n]}^\top) \\
& \leq &  
( \bbone \bB_{[j,n]}^\top )
\bigg( \frac{\partial^2}{\partial x_k \partial x_1} G_{j,i}(\bbone), \ldots,
\frac{\partial^2}{\partial x_k \partial x_d} G_{j,i}(\bbone) \bigg)^\top,
\end{eqnarray*}
thus
\[
\left( \bB_j- \bTheta_{j,n} \right)_{i,k} \leq d \, \| \bbone \| \, m_2(j).
\]
So finally we obtain
\[
\sum_{j=1}^n (\bB_j - \bTheta_{j,n} ) \, \bTheta_{[j,n]}  \leq
d \, \| \bbone \| \sum_{j=1}^n m_2(j)
\begin{bmatrix}
1 & \ldots & 1 \\
\vdots & \ddots  & \vdots \\
1 & \ldots & 1
\end{bmatrix}
\bB_{[j,n]},
\]
which goes to the 0 matrix, whenever
$\|(\bI - \bB_n)^{-1}\| \, m_2(n) \to 0$.
\end{proof*}

\section*{Acknowledgement}

The work of the first author  was partially supported by the European Union and the
European Social Fund through project FuturICT.hu (grant no.: T\'AMOP-4.2.2.C-11/1/KONV-2012-0013).
The second, third and fourth authors have been partially supported by the Hungarian
Chinese Intergovernmental S \& T Cooperation Programme 2011--2013.
The second author has been supported by the 
T\'AMOP-4.2.2.C-11/1/KONV-2012-0001 project. The project has been 
supported by the European Union, co-financed by the European Social Fund.
The third and fourth author have been supported by
the European Union and co-funded by the European Social Fund
under the project `Telemedicine-focused research activities on the field of Mathematics,
Informatics and Medical sciences' of project number
`T\'AMOP-4.2.2.A-11/1/KONV-2012-0073'.
The third author's research
was realized in the frames of T\'AMOP 4.2.4. A/2-11-1-2012-0001 `National Excellence
Program – Elaborating and operating an inland student and researcher personal support
system'. The project was subsidized by the European Union and co-financed by the
European Social Fund.
The third author was partially supported by the Hungarian Scientific Research Fund OTKA PD106181.
The fourth author has been supported by the Hungarian Scientific Research Fund
under Grant No.~OTKA T-079128.

\end{document}